\newcommand{\C}{\mathbb{C}}
\newcommand{\EE}{\mathcal{E}}
\newcommand{\qbinom}{\genfrac{[}{]}{0pt}{}}
\newcommand{\rbhs}{\mathcal{R}}
\newcommand{\RR}{\mathcal{R}}
\DeclareMathOperator{\ope}{e}
\DeclareMathOperator{\opE}{E}
\DeclareMathOperator{\opF}{F}
\DeclareMathOperator{\oph}{h}
\DeclareMathOperator{\opP}{P}
\DeclareMathOperator{\opR}{R}
\DeclareMathOperator{\opr}{r}
\DeclareMathOperator{\opS}{S}
\DeclareMathOperator{\opT}{T}
\title{Deformed homogeneous polynomials and the deformed $q$-exponential operator}
\authors{Ronald Orozco López}
\abstract{In this paper, we introduce the deformed homogeneous polynomials $\mathrm{R}_{n}(x,y;u|q)$. These polynomials generalize some classical polynomials: the Rogers-Szeg\"o polynomials $\mathrm{h}_{n}(x|q)$, the generalized Rogers-Szeg\"o polynomials $\mathrm{r}_{n}(x,y)$, the Stieltjes-Wigert polynomials $\mathrm{S}_{n}(x;q)$, among others. Basic properties of the polynomial $\mathrm{R}_{n}$ are given, along with recurrence relations, its $q$-difference equation, and representations. Generating functions for the polynomials $\mathrm{R}_{n}(x,y;u|q)$ are given. These functions include generalizations of the Mehler and Rogers formulas. In addition, generalizations of the $q$-binomial formula and the Heine transformation formula are obtained. These results are obtained via the $u$-deformed $q$-exponential operator $\mathrm{E}(yD_{q}|u)$, defined here. From this operator, we obtain for free the operators $\opT(yD_{q})$ the Chen, $\mathrm{R}(yD_{q})$ of Saad, $\mathcal{E}(yD_{q})$ of Exton, and $\mathcal{R}(yD_{q})$ of Rogers-Ramanujan when $u=1,q,\sqrt{q},q^2$, respectively. We introduce the deformed basic hypergeometric series ${}_{r}\Phi_{s}$, a generalization of the classical basic hypergeometric series. New transformation formulas for basic hypergeometric series are obtained.}
\keywords{Deformed homogeneous polynomials, Rogers-Szeg\"o polynomials, deformed $q$-exponential operator, Exton polynomials, Stieltjes-Wigert polynomials.}
\begin{document}   

\section{Introduction}

We begin with some notation and terminology for basic hypergeometric series \cite{gasper}. The $q$-shifted factorial is defined by
\begin{align*}
    (a;q)_{n}&=\begin{cases}
        1&\text{ if }n=0;\\
    \prod_{k=0}^{n-1}(1-q^{k}a),&\text{ if }n\neq0,\\
    \end{cases}\hspace{1cm}q\in\C,\\
    (a;q)_{\infty}&=\lim_{n\rightarrow\infty}(a;q)_{n}=\prod_{k=0}^{\infty}(1-aq^{k}),\hspace{1cm} \vert q\vert<1.
\end{align*}
The multiple $q$-shifted factorials are defined by
\begin{align*}
    (a_{1},a_{2},\ldots,a_{m};q)_{n}&=(a_{1};q)_{n}(a_{2};q)_{n}\cdots(a_{m};q)_{n},\hspace{0.3cm}q\in\C,\\
    (a_{1},a_{2},\ldots,a_{m};q)_{\infty}&=(a_{1};q)_{\infty}(a_{2};q)_{\infty}\cdots(a_{m};q)_{\infty},\hspace{0.3cm}\vert q\vert<1.
\end{align*}
Some useful identities for $q$-shifted factorial:
\begin{align}
    (a;q)_{n}&=\frac{(a;q)_{\infty}}{(aq^n;q)_{\infty}},\ aq^n\neq q^{-k},\ k=1,2,3,\ldots,\label{eqn_iden1}\\
    (a;q)_{n+k}&=(a;q)_{n}(aq^{n};q)_{k},\label{eqn_iden2}\\
    (a;q)_{2n}&=(a;q^2)_{n}(aq;q^2)_{n},\label{eqn_iden3}\\
    (a^2;q^2)_{n}&=(a;q)_{n}(-a;q)_{n}.\label{eqn_iden4}
\end{align}
In our work, we will use the identities for binomial coefficients:
\begin{align*}
    \binom{n+k}{2}&=\binom{n}{2}+\binom{k}{2}+nk,\\
    \binom{n-k}{2}&=\binom{n}{2}+\binom{k}{2}+k(1-n).
\end{align*}
The $q$-binomial coefficient is defined by
\begin{equation*}
\qbinom{n}{k}_{q}=\frac{(q;q)_{n}}{(q;q)_{k}(q;q)_{n-k}}.
\end{equation*}
The $q$-binomial coefficient satisfies
\begin{align}
    \qbinom{n+1}{k}_{q}&=\qbinom{n}{k}_{q}+q^{n+1-k}\qbinom{n}{k-1}_{q}=
    q^{k}\qbinom{n}{k}_{q}+\qbinom{n}{k-1}_{q},\label{eqn_pascal}\\
    \qbinom{n}{k}_{q}&=\frac{(q^{-n};q)_{k}}{(q;q)_{k}}(-q^n)^{k}q^{-\binom{k}{2}}.\nonumber
\end{align}
The ${}_r\phi_{s}$ basic hypergeometric series is defined by
\begin{equation*}
    {}_r\phi_{s}\left(
    \begin{array}{c}
         a_{1},a_{2},\ldots,a_{r} \\
         b_{1},b_{2},\ldots,b_{s}
    \end{array}
    ;q,z
    \right)=\sum_{n=0}^{\infty}\frac{(a_{1},a_{2},\ldots,a_{r};q)_{n}}{(q;q)_{n}(b_{1},b_{2},\ldots,b_{s};q)_{n}}\Big[(-1)^{n}q^{\binom{n}{2}}\Big]^{1+s-r}z^n.
\end{equation*}
In this paper, we will frequently use the $q$-binomial theorem:
\begin{equation}\label{eqn_qbin_the}
    {}_1\phi_{0}\left(\begin{array}{c}
         a\\
         - 
    \end{array};q,-z\right)=\frac{(az;q)_{\infty}}{(z;q)_{\infty}}=\sum_{n=0}^{\infty}\frac{(a;q)_{n}}{(q;q)_{n}}z^{n}.
\end{equation}
The $q$-exponential $\ope_{q}(z)$ is defined by
\begin{equation*}
    \ope_{q}(z)=\sum_{n=0}^{\infty}\frac{z^n}{(q;q)_{n}}={}_1\phi_{0}\left(\begin{array}{c}
         0\\
         - 
    \end{array};q,z\right)=\frac{1}{(z;q)_{\infty}}.
\end{equation*}
Another $q$-analogue of the classical exponential function is
\begin{equation*}
    \opE_{q}(z)=\sum_{n=0}^{\infty}q^{\binom{n}{2}}\frac{z^n}{(q;q)_{n}}={}_0\phi_{0}\left(\begin{array}{c}
         -\\
         - 
    \end{array};q,-z\right)=(-z;q)_{\infty}.
\end{equation*}
For all $u\in\C$, we define the following function, which we will call the deformed $q$-exponential function,
\begin{equation*}
    \ope_{q}(z,u)=
    \begin{cases}
        \sum_{n=0}^{\infty}u^{\binom{n}{2}}\frac{z^{n}}{(q;q)_{n}}&\text{ if }u\neq0;\\
        1+\frac{z}{1-q}&\text{ if }u=0.
    \end{cases}
\end{equation*}
Some deformed $q$-exponential functions are:
\begin{align*}
    \ope_{q}(z,1)&=e_{q}(z)=\frac{1}{(z;q)_{\infty}},\ \vert z\vert<1,\\
    \ope_{q}(-z,q)&=\opE_{q}(-z)=(z;q)_{\infty},\ z\in\C,\\
    e_{q}(z,\sqrt{q})&=\mathcal{E}_{q}(z)=\sum_{n=0}^{\infty}q^{\frac{1}{2}\binom{n}{2}}\frac{z^n}{(q;q)_{n}}={}_{1}\phi_{1}\left(\begin{array}{c}
         0\\
         -\sqrt{q}
    \end{array};\sqrt{q},-z\right),\ z\in\C,\\
    \ope_{q}(qz,q^2)&=\mathcal{R}_{q}(z)=\sum_{n=0}^{\infty}q^{n^2}\frac{z^n}{(q;q)_{n}},\ z\in\C,
\end{align*}
where $\mathcal{E}_{q}(z)$ is the Exton $q$-exponential function and  $\mathcal{R}_{q}(z)$ is the Rogers-Ramanujan function. The function $\ope_{q}(z,u)$ is an $u$-deformed $q$-exponential function since when $u\mapsto1$, then $\ope_{q}(z,u)\mapsto\ope_{q}(z)$ and since $\ope_{q}(z)$ is a $q$-deformation of the exponential function $\ope^z$, then the function $\ope_{q}(z,u)$ defines a double deformation of the exponential function.

Associated to the deformed $q$-exponential function $\ope_{q}(z,u)$ we have the deformed homogeneous polynomials $\opR_{n}(x,y;u|q)$ defined by
\begin{equation}
    \opR_{n}(x,y;u|q)=\sum_{k=0}^{n}\qbinom{n}{k}_{q}u^{\binom{k}{2}}x^{n-k}y^{k},
\end{equation}
whose generating function is
\begin{equation}
    \ope_{q}(xt)\ope_{q}(yt,u)=\sum_{n=0}^{\infty}\opR_{n}(x,y;u|q)\frac{t^n}{(q;q)_{n}}.
\end{equation}
The polynomials $\opR_{n}(x,y;u|q)$ are a generalization of classical polynomials. For example:
\begin{itemize}
    \item If $x=1$, $y=x$, and $u=1$, then
    \begin{equation}
    \opR_{n}(1,x;1|q)=\oph_{n}(x|q),
\end{equation}
where $\oph_{n}(x|q)$ is the classical Rogers-Szeg\"o polynomial of degree $n$. The Rogers-Szeg\"o polynomials play an important role in the theory of the orthogonal polynomials \cite{Al} and physics \cite{jag}. 

\item If $u=1$, then
\begin{equation}
    \opR_{n}(x,y;1|q)=\mathrm{r}_{n}(x,y),
\end{equation}
where $\mathrm{r}_{n}(x,y)$ is a generalization of the Rogers-Szeg\"o polynomials given by Saad et al. \cite{saad}.

\item If $y\mapsto-y$ and $u=q$, then
\begin{equation}
    \opR_{n}(x,-y;q|q)=\opP_{n}(x,y),
\end{equation}
where $\opP_{n}(x,y)$ are the Cauchy polynomials
\begin{equation*}
    \opP_{n}(x,y)=(x-y)(x-qy)(x-q^2y)\cdots(x-q^{n-1}y).
\end{equation*}

\item If $x=1$, $y=qx$, and $u=q^2$, then
\begin{equation}
    \opR_{n}(1,qx;q^2|q)=(q;q)_{n}\opS_{n}(x;q)={}_{1}\phi_{1}\left(\begin{array}{c}
         q^{-n}\\
         0
    \end{array};q,-q^{n+1}x\right),
\end{equation}
where $\opS_{n}(x;q)$ are the Stieltjes-Wigert polynomials.
\end{itemize}
In this paper, we give basic properties of the polynomials $\opR_{n}$, their $q$-difference equation, representations, and recurrence relations. Five types of generating functions for the polynomials $\opR_{n}(x,y;u|q)$ are obtained
\begin{align*}
    &\sum_{n=0}^{\infty}v^{\binom{n}{2}}\opR_{n}(x,y;u|q)\frac{t^n}{(q;q)_{n}},\text{  (Deformed generating function)}\\
    &\sum_{n=0}^{\infty}\opR_{n}(x,y;u|q)\frac{(a;q)_{n}}{(q;q)_{n}}t^n,\text{  (Srivastava-Agarwal type)}\\
    &\sum_{n=0}^{\infty}\opR_{n}(x,y;u|q)\frac{(z;q)_{n}t^n}{(az;q)_{n}(q;q)_{n}},\\
    &\sum_{n=0}^{\infty}\opR_{n}(x,y;u|q)\opR_{n}(z,w;v|q)\frac{t^n}{(q;q)_{n}},\text{  (Mehler type)}\\
    &\sum_{n=0}^{\infty}\sum_{m=0}^{\infty}\opR_{n+m}(x,y;u|q)\frac{v^{\binom{n}{2}}w^{\binom{m}{2}}t^n}{(q;q)_{n}(q;q)_{m}},\text{  (Rogers type)}
\end{align*}
and many special cases are given. These generating functions are expressed in terms of the deformed basic hypergeometric series ${}_{r}\Phi_{s}$ defined by
\begin{align}
        &{}_{r}\Phi_{s}\left(
    \begin{array}{c}
         a_{1},a_{2},\ldots,a_{r} \\
         b_{1},\ldots,b_{s}
    \end{array}
    ;q,u,z
    \right)\nonumber\\
    &\hspace{3cm}=\sum_{n=0}^{\infty}u^{\binom{n}{2}}\frac{(a_{1},a_{2},\ldots,a_{r};q)_{n}}{(q,b_{1},b_{2},\ldots,b_{s};q)_{n}}\bigg[(-1)^nq^{\binom{n}{2}}\bigg]^{1+s-r}z^n.                
    \end{align}

The paper is divided into the following sections: Section 2 discusses the basic identities of the $q$-differential operator. In Section 3, the deformed basic hypergeometric series ${}_{r}\Phi_{s}$ is introduced. As we will note later, these series are closely related to the polynomials $\opR_{n}(x,y;u,v|q)$. In this section, we will give the equation in $q$-difference of the ${}_{2}\Phi_{1}$-series. Section 4 introduces the deformed homogeneous polynomials. Basic properties, $q$-difference equations, basic hypergeometric representations, and recurrence relations are given. Section 5 introduces the deformed $q$-exponential operator and provides some identities for this operator. In Section 6, generating functions for $\opR_{n}(x,y;u|q)$ are given: generalized $q$-binomial theorem, generalization of Heine's transformation formula, Mehler's, Srivastava-Agarwal, and Rogers type formulas. New transformation formulas for basic hypergeometric series are given.

\section{The q-differential operator}
The $q$-differential operator $D_{q}$ is defined by:
\begin{equation}\label{eqn_qdiff}
    D_{q}f(x)=\frac{f(x)-f(qx)}{x}
\end{equation}
and the Leibniz rule for $D_{q}$ is
\begin{equation}\label{eqn_leibniz}
    D_{q}^{n}\{f(x)g(x)\}=\sum_{k=0}^{n}q^{k(k-n)}\qbinom{n}{k}_{q}D_{q}^{k}\{f(x)\}D_{q}^{n-k}\{g(q^{k}x)\}.
\end{equation}
Then
\begin{equation*}
    D_{q}^nx^{k}=\frac{(q;q)_{k}}{(q;q)_{k-n}}x^{k-n}.
\end{equation*}
From Eq.~\eqref{eqn_qdiff}, we have identities for the deformed $q$-exponential function:
\begin{align}\label{eqn_kder_basic}
&\ope_{q}(z,u)-\ope_{q}(qz,u)-z\ope_{q}(uz,u)=0.\\
 &D_{q}^{k}\{\ope_{q}(ax,u)\}=a^{k}u^{\binom{k}{2}}\ope_{q}(au^{k}x,u).
\end{align}
According to the Leibniz formula and from Eqs. (\ref{eqn_leibniz}) and (\ref{eqn_kder_basic}),
\begin{align}\label{eqn_lei_prod_exp}
    &D_{q}^{n}\{\ope_{q}(ax,u)\ope_{q}(bx,v)\}\nonumber
    \\
    &\hspace{2cm}=\sum_{k=0}^{n}\qbinom{n}{k}_{q}u^{\binom{k}{2}}v^{\binom{n-k}{2}}a^kb^{n-k}
    \ope_{q}\left(au^{k}x,u\right)\ope_{q}\left(bq^{k}v^{n-k}x,v\right).
\end{align}
If $u=v=q$ in Eq.~\eqref{eqn_lei_prod_exp}, then
    \begin{align}\label{eqn_iden6}
        D_{q}^{n}\{(ax,bx;q)_{\infty}\}=q^{\binom{n}{2}}(ax,bq^nx;q)_{\infty}\sum_{k=0}^{n}\qbinom{n}{k}_{q}q^{k(k-n)}\frac{a^kb^{n-k}}{(ax;q)_{k}}.
    \end{align}
If $u=q$ and $v=1$ in Eq.~\eqref{eqn_lei_prod_exp}, then
    \begin{align}\label{eqn_iden7}
        D_{q}^{n}\left\{\frac{(-ax;q)_{\infty}}{(bx;q)_{\infty}}\right\}=\frac{(-ax;q)_{\infty}}{(bx;q)_{\infty}}\sum_{k=0}^{n}\qbinom{n}{k}_{q}q^{\binom{k}{2}}a^kb^{n-k}\frac{(bx;q)_{k}}{(-ax;q)_{k}}.
    \end{align}
If $u=v=1$ in Eq.~\eqref{eqn_lei_prod_exp}, then
    \begin{equation}\label{eqn_iden8}
        D_{q}^{n}\left\{\frac{1}{(ax,bx;q)_{\infty}}\right\}
        =\frac{1}{(ax,bx;q)_{\infty}}\sum_{k=0}^{n}\qbinom{n}{k}_{q}a^kb^{n-k}(bx;q)_{k}.
    \end{equation}

\section{Deformed basic hypergeometric series}

\begin{definition}
Set $0<\vert q\vert<1$ and take $u\in\C$. We define the $u$-deformed basic hypergeometric series ${}_{r}\Phi_{s}$ as
    \begin{align}
        &{}_{r}\Phi_{s}\left(
    \begin{array}{c}
         a_{1},a_{2},\ldots,a_{r} \\
         b_{1},\ldots,b_{s}
    \end{array}
    ;q,u,z
    \right)\nonumber\\
    &\hspace{3cm}=\sum_{n=0}^{\infty}u^{\binom{n}{2}}\frac{(a_{1},a_{2},\ldots,a_{r};q)_{n}}{(q,b_{1},b_{2},\ldots,b_{s};q)_{n}}\bigg[(-1)^nq^{\binom{n}{2}}\bigg]^{1+s-r}z^n.                
    \end{align}
\end{definition}
By letting that $u$ take values $1, q, q^2$ or $\sqrt{q}$, we obtain the following basic hypergeometric series:
\begin{itemize}
    \item If $u=1$, then ${}_{r}\Phi_{s}={}_{r}\phi_{s}$.
    \item If $u=q$, 
\begin{equation}
    {}_{r+1}\Phi_{r}\left(
    \begin{array}{c}
         a_{1},a_{2},\ldots,a_{r},0 \\
         b_{1},\ldots,b_{r}
    \end{array}
    ;q,q,z
    \right)={}_{r}\phi_{r}\left(
    \begin{array}{c}
         a_{1},a_{2},\ldots,a_{r} \\
         b_{1},b_{2},\ldots,b_{r}
    \end{array}
    ;q,-z
    \right)
\end{equation}
for all $z\in\C$.
\item If $u=q^2$ and mapping $z\mapsto qz$,
\begin{equation}
    {}_{r+1}\Phi_{r}\left(
    \begin{array}{c}
         a_{1},\ldots,a_{r+1} \\
         b_{1},\ldots,b_{r}
    \end{array}
    ;q,q^2,qz
    \right)={}_{r+1}\phi_{r+2}\left(
    \begin{array}{c}
         a_{1},\ldots,a_{r+1} \\
         b_{1},\ldots,b_{r},0,0
    \end{array}
    ;q,qz
    \right)
\end{equation}
for all $z\in\C$.
\item If $u=\sqrt{q}$,
\begin{align}
    &{}_{r+1}\Phi_{r}\left(
    \begin{array}{c}
         a_{1},\ldots,a_{r+1} \\
         b_{1},\ldots,b_{r}
    \end{array}
    ;q,\sqrt{q},z
    \right)\nonumber\\
    &\hspace{1cm}={}_{2r+2}\phi_{2r+2}\left(
    \begin{array}{ccc}
         \sqrt{a_{1}},-\sqrt{a_{1}}&,\ldots,&\sqrt{a_{r+1}},-\sqrt{a_{r+1}}\hspace{0.6cm}\\
         \sqrt{b_{1}},-\sqrt{b_{1}}&,\ldots,&\sqrt{b_{r}},-\sqrt{b_{r}},-\sqrt{q},0
    \end{array}
    ;\sqrt{q},-z
    \right)
\end{align}
for all $z\in\C$. 
\end{itemize}

Therefore, a deformed basic hypergeometric series generalizes the basic hypergeometric series. A series $\sum_{n=0}^{\infty}v_{n}$ is a deformed basic hypergeometric series if the quotient $v_{n+1}/v_{n}$ is a rational function of $q^n$ for a fixed base $q$ and if it is proportional to $u^n$. The most general form of the quotient is
\begin{equation*}
    \frac{v_{n+1}}{v_{n}}=u^n\frac{(1-a_{1}q^n)(1-a_{2}q^n)\cdots(1-a_{r}q^n)}{(1-q^{n+1})(1-b_{1}q^n)\cdots(1-b_{s}q^n)}(-q^n)^{1+s-r}z
\end{equation*}
normalizing $v_{0}=1$.
\begin{theorem}
Some convergence conditions for the ${}_{r}\Phi_{s}$-series are
\begin{itemize}
    \item $1+s-r\neq0$. If $0<\vert uq^{1+s-r}\vert<1$, then ${}_{r}\Phi_{s}$ is an entire function. If $\vert uq^{1+s-r}\vert=1$, then ${}_{r}\Phi_{s}$ converges for $\vert z\vert<1$. If $\vert uq^{1+s-r}\vert>1$, then ${}_{r}\Phi_{s}$ is divergent.
    \item $1+s-r=0$. If $0<\vert u\vert<1$, then ${}_{r}\Phi_{s}$ is an entire function. If $\vert u\vert=1$, then ${}_{r}\Phi_{s}$ converges for $\vert z\vert<1$. If $\vert u\vert>1$, then ${}_{r}\Phi_{s}$ is divergent.
\end{itemize}
\end{theorem}
The deformed $q$-exponential function has the following representation in $u$-deformed basic hypergeometric series
\begin{equation}
    \ope_{q}(z,u)={}_{1}\Phi_{0}\left(
    \begin{array}{c}
         0 \\
         -
    \end{array}
    ;q,u,z    
    \right).
\end{equation}
The deformed basic hypergeometric series ${}_{2}\Phi_{1}$ is
\begin{equation}\label{eqn_def_he}
    {}_{2}\Phi_{1}\left(
    \begin{array}{c}
         a,b \\
         c
    \end{array}
    ;q,u,z \right)=\sum_{n=0}^{\infty}u^{\binom{n}{2}}\frac{(a,b;q)_{n}}{(c;q)_{n}(q;q)_{n}}z^n.
\end{equation}
The $q$-difference operator applied to the ${}_{2}\Phi_{1}$-series:
\begin{equation}
    D_{q}^n{}_{2}\Phi_{1}\left(
    \begin{array}{c}
         a,b \\
         c
    \end{array}
    ;q,u,z    
    \right)=u^{\binom{n}{2}}\frac{(a,b;q)_{n}}{(c;q)_{n}}{}_{2}\Phi_{1}\left(
    \begin{array}{c}
         aq^n,bq^n \\
         cq^n
    \end{array}
    ;q,u,u^nz    
    \right)
\end{equation}
which can be checked directly. 
\begin{theorem}
The deformed basic hypergeometric series ${}_{2}\Phi_{1}$, Eq.~\eqref{eqn_def_he}, satisfies the $q$-difference equation
\begin{align}\label{eqn_qdiff_dbh}
    &czD_{q}^{2}f(z)-abqz^2D_{q}^2f(uz)+(1-c)D_{q}f(z)\nonumber\\
    &\hspace{1cm}+\big[(1-a)(1-b)-(1-abq)\big]zD_{q}f(uz)-(1-a)(1-b)f(uz)=0.
\end{align}
\end{theorem}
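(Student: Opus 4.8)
The plan is to prove Eq.(\ref{eqn_qdiff_dbh}) by the coefficient-comparison method, working directly with the power-series definition of $f(z)={}_{2}\Phi_{1}$ rather than with any closed form. Write $f(z)=\sum_{n=0}^{\infty}c_{n}z^{n}$, where $c_{n}=u^{\binom{n}{2}}\frac{(a;q)_{n}(b;q)_{n}}{(q;q)_{n}(c;q)_{n}}$. Two elementary facts drive everything. First, the action of $D_{q}$ on monomials is $D_{q}^{k}z^{n}=\frac{(q;q)_{n}}{(q;q)_{n-k}}z^{n-k}$, so in particular $D_{q}z^{n}=(1-q^{n})z^{n-1}$ and $D_{q}^{2}z^{n}=(1-q^{n})(1-q^{n-1})z^{n-2}$. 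Second, the substitution $z\mapsto uz$ simply multiplies the $n$-th coefficient by $u^{n}$, i.e. $f(uz)=\sum_{n}c_{n}u^{n}z^{n}$, and likewise $D_{q}^{j}f(uz)=\sum_{n}c_{n}u^{n}\frac{(q;q)_{n}}{(q;q)_{n-j}}z^{n-j}$.

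First I would substitute these expansions into the five terms on the left-hand side of Eq.(\ref{eqn_qdiff_dbh}) and expand each as a power series in $z$. The two terms built from $f(z)$, namely $czD_{q}^{2}f(z)$ and $(1-c)D_{q}f(z)$, both contribute to the coefficient of $z^{m}$ through the index $n=m+1$, while the three terms built from $f(uz)$ contribute through $n=m$. Collecting the coefficient of $z^{m}$ therefore yields a two-term relation of the shape
\begin{equation*}
c_{m+1}(1-q^{m+1})\big[c(1-q^{m})+(1-c)\big]+c_{m}u^{m}\,\Lambda_{m}=0,
\end{equation*}
where $\Lambda_{m}$ gathers the three $f(uz)$-contributions,
\begin{equation*}
\Lambda_{m}=-abq(1-q^{m})(1-q^{m-1})+\big[(1-a)(1-b)-(1-abq)\big](1-q^{m})-(1-a)(1-b).
\end{equation*}

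The crux is then a purely algebraic verification that this relation holds for every $m\geq0$. The first simplification is the cancellation $c(1-q^{m})+(1-c)=1-cq^{m}$, which is exactly the factor needed to absorb the denominator of the coefficient ratio
\begin{equation*}
\frac{c_{m+1}}{c_{m}}=u^{m}\,\frac{(1-aq^{m})(1-bq^{m})}{(1-q^{m+1})(1-cq^{m})},
\end{equation*}
obtained from $\binom{m+1}{2}-\binom{m}{2}=m$. After this cancellation the $f(z)$-part collapses to $c_{m}u^{m}(1-aq^{m})(1-bq^{m})$, and the whole relation reduces, after dividing by $c_{m}u^{m}$, to the single polynomial identity $(1-aq^{m})(1-bq^{m})+\Lambda_{m}=0$. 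Expanding both members in powers of $q^{m}$ and matching the constant, $q^{m}$, and $q^{2m}$ coefficients (each of which reduces to $1$, $-(a+b)$, and $ab$ respectively) finishes the proof.

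I expect the main obstacle to be bookkeeping rather than conceptual: keeping the reindexing ($n=m+1$ versus $n=m$) straight across the five terms and carrying the parameters $a,b,c$ correctly through $\Lambda_{m}$. The one genuinely clarifying observation is that the deformation $u$ is threaded consistently — the factor $u^{m}$ produced by $f(uz)$ is precisely the factor $u^{m}$ appearing in $c_{m+1}/c_{m}$, which is why the equation must feature $f(uz)$ (and not $f(z)$) in its second, fourth, and fifth terms.
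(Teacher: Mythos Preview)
Your proposal is correct and follows essentially the same route as the paper: substitute the power series into the left-hand side of Eq.(\ref{eqn_qdiff_dbh}), collect coefficients, and reduce everything to the two-term recurrence $c_{m+1}(1-q^{m+1})(1-cq^{m})=c_{m}u^{m}(1-aq^{m})(1-bq^{m})$ via the same algebraic simplifications $c(1-q^{m})+(1-c)=1-cq^{m}$ and $-\Lambda_{m}=(1-aq^{m})(1-bq^{m})$. The only cosmetic difference is direction: the paper starts from an unknown series $\sum v_{n}z^{n}$, derives the recurrence, and solves it to recover the ${}_{2}\Phi_{1}$ coefficients, whereas you start from the known coefficients and verify they satisfy the recurrence---the computation is identical.
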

\begin{proof}
Suppose that $f(x)=\sum_{n=0}^{\infty}v_{n}z^n$ is the solution of Eq.~\eqref{eqn_qdiff_dbh}. Then
\begin{align*}
    &czD_{q}^{2}f(z)-abqz^2D_{q}^{2}f(uz)+(1-c)D_{q}f(z)\\
    &\hspace{1cm}+\big[(1-a)(1-b)-(1-abq)\big]zD_{q}f(uz)-(1-a)(1-b)f(uz)\\
    &\hspace{0.5cm}=c\sum_{n=0}^{\infty}v_{n}(1-q^n)(1-q^{n-1})z^{n-1}-abq\sum_{n=0}^{\infty}v_{n}(1-q^n)(1-q^{n-1})u^nz^{n}\\
    &\hspace{1cm}+(1-c)\sum_{n=0}^{\infty}v_{n}(1-q^n)z^{n-1}+\big[(1-a)(1-b)-(1-abq)\big]\sum_{n=0}^{\infty}v_{n}(1-q^n)u^nz^{n}\\
    &\hspace{1cm}-(1-a)(1-b)\sum_{n=0}^{\infty}v_{n}u^nz^n=0,
\end{align*}
Therefore, 
\begin{align*}
    &v_{n+1}\\
    &=\frac{u^n[abq(1-q^n)(1-q^{n-1})-((1-a)(1-b)-(1-abq))(1-q^n)+(1-a)(1-b)]}{c(1-q^{n+1})(1-q^n)+(1-c)(1-q^{n+1})}v_{n}\\
    &=u^n\frac{(1-aq^n)(1-bq^n)}{(1-q^{n+1})(1-cq^n)}v_{n}.
\end{align*}
Then
\begin{equation*}
    v_{n}=\prod_{k=0}^{n-1}u^k\frac{(a;q)_{n}(b;q)_{n}}{(q;q)_{n}(c;q)_{n}}=u^{\binom{n}{2}}\frac{(a;q)_{n}(b;q)_{n}}{(q;q)_{n}(c;q)_{n}},
\end{equation*}
normalizing $v_{0}=1$.
\end{proof}
Replacing $a,b,c$ with $q^a$, $q^b$, $q^c$ in Eq.~\eqref{eqn_qdiff_dbh} and then dividing by $(1-q)^2$, and taking formal limits, shows that Eq.~\eqref{eqn_qdiff_dbh} tends to the functional-differential equation
\begin{equation}\label{eqn_fdiff_hyp}
    zf^{\prime\prime}(z)-z^2f^{\prime\prime}(uz)+cf^{\prime}(z)-(a+b+1)zf^{\prime}(uz)-abf(uz)=0
\end{equation}
as $q\rightarrow1$. 
Equation~(\ref{eqn_fdiff_hyp}) has the solution
\begin{equation}
    {}_{2}\opF_{1}\left(
    \begin{array}{c}
         a,b \\
         c
    \end{array}
    ;u,z    
    \right)=\sum_{n=0}^{\infty}u^{\binom{n}{2}}\frac{(a)_{n}(b)_{n}}{(c)_{n}}\frac{z^{n}}{n!}.
\end{equation}
When $u=1$, we recover the classical hypergeometric function.

\section{Deformed homogeneous polynomials}

\subsection{Definition and basic properties}

\begin{definition}
We define the $(u,v)$-deformed homogeneous polynomial as
\begin{equation}
    \opR_{n}(x,y;u,v|q)=\sum_{k=0}^{n}\qbinom{n}{k}_{q}u^{\binom{n-k}{2}}v^{\binom{k}{2}}x^{n-k}y^{k}.
\end{equation}
\end{definition}
Some specializations of $\opR_{n}(x,y;u|q)$ are:
\begin{align}
    \opR_{n}(x,y;1,1|q)&=\opr_{n}(x,y),\label{eqn_poly1}\\
    \opR_{n}(1,x;q,q|q)&=\oph_{n}(x|q^{-1})=q^{\binom{n}{2}}\sum_{k=0}^{n}\qbinom{n}{k}_{q}q^{k(k-n)}x^k,\\
    \opR_{n}(1,-x;1,q|q)&=(x;q)_{n},\\
    \opR_{n}(1,qx;1,q^2|q)&=\opS^*_{n}(x;q)=(q;q)_{n}\mathrm{S}_{n}(x;q).
\end{align}
We define two new polynomials: The homogeneous Stieltjes-Wigert polynomials
\begin{equation}
    \opS_{n}(x,y;q)=\sum_{k=0}^{n}\qbinom{n}{k}_{q}q^{k^2}x^{n-k}y^{k}
\end{equation}
and the Exton polynomials 
\begin{align}
    \opE_{n}(x,y;q)&=\opR_{n}(x,y;1,\sqrt{q}|q)=\sum_{k=0}^{n}\qbinom{n}{k}_{q}\sqrt{q}^{\binom{k}{2}}x^{n-k}y^{k}\label{eqn_poly5}\\
    &=\sum_{k=0}^{n}\qbinom{n}{k}_{\sqrt{q}}\frac{(-\sqrt{q};\sqrt{q})_{n}}{(-\sqrt{q};\sqrt{q})_{k}(-\sqrt{q};\sqrt{q})_{n-k}}\sqrt{q}^{\binom{n}{2}}x^{n-k}y^{k}.
\end{align}
As we will see below, the Stieltjes-Wigert and Exton polynomials are related to the Rogers-Ramanujan and Exton $q$-exponential functions, respectively. As 
\begin{equation}
    \opR_{n}(x,y;u,v|q)=u^{\binom{n}{2}}\opR_{n}(x,u^{1-n}y;1,uv|q),
\end{equation}
then we will deal with $u$-deformed homogeneous polynomials
\begin{equation}\label{eqn_def_rs}
    \opR_{n}(x,y;u|q)\equiv\opR_{n}(x,y;1,u|q).
\end{equation}

\subsection{Recurrence relations}

\begin{theorem}
The polynomials $\opR_{n}(x,y;u|q)$ satisfy the following recursion relations
    \begin{align}
        \opR_{n+1}(x,y;u|q)&=x\opR_{n}(x,qy;u|q)+y\opR_{n}(x,uy;u|q),\\
        \opR_{n+1}(x,y;u|q)&=x\opR_{n}(x,y;u|q)+y\opR_{n}(qx,uy;u|q).
    \end{align}
\end{theorem}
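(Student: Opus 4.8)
The plan is to derive both recurrences directly from the defining sum
\[\rr_{n}(x,y;u|q)=\sum_{k=0}^{n}\qbinom{n}{k}_{q}u^{\binom{k}{2}}x^{n-k}y^{k},\]
by writing out $\rr_{n+1}(x,y;u|q)$ and splitting the coefficient $\qbinom{n+1}{k}_{q}$ via the two Pascal-type identities in Eq.(\ref{eqn_pascal}). Each of the two forms of that identity produces one of the two recurrences, so the statements are proved in parallel.

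First I would treat the first recurrence using $\qbinom{n+1}{k}_{q}=q^{k}\qbinom{n}{k}_{q}+\qbinom{n}{k-1}_{q}$. Substituting this into $\rr_{n+1}(x,y;u|q)=\sum_{k=0}^{n+1}\qbinom{n+1}{k}_{q}u^{\binom{k}{2}}x^{n+1-k}y^{k}$ and splitting, the part carrying $q^{k}\qbinom{n}{k}_{q}$ gives, after factoring out one power of $x$ and absorbing $q^{k}$ into $y^{k}$, exactly $x\,\rr_{n}(x,qy;u|q)$ (the $k=n+1$ term drops since $\qbinom{n}{n+1}_{q}=0$). The remaining part carries $\qbinom{n}{k-1}_{q}$; reindexing by $j=k-1$ turns it into $\sum_{j=0}^{n}\qbinom{n}{j}_{q}u^{\binom{j+1}{2}}x^{n-j}y^{j+1}$.

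The one genuinely substantive step — and the only place any care is needed — is the deformed exponent of $u$ under this shift. Here I would invoke the elementary identity $\binom{j+1}{2}=\binom{j}{2}+j$, so that $u^{\binom{j+1}{2}}=u^{\binom{j}{2}}u^{j}$, and the extra factor $u^{j}$ combines with $y^{j+1}$ to give $y\cdot(uy)^{j}$. This is precisely what turns the sum into $y\,\rr_{n}(x,uy;u|q)$ and explains why the second argument is deformed to $uy$; everything else is bookkeeping of the index range.

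For the second recurrence I would repeat the argument verbatim with the other form $\qbinom{n+1}{k}_{q}=\qbinom{n}{k}_{q}+q^{n+1-k}\qbinom{n}{k-1}_{q}$. Now the $\qbinom{n}{k}_{q}$ term gives $x\,\rr_{n}(x,y;u|q)$ with no $q$-shift in $y$, while in the shifted term the factor $q^{n+1-k}$ combines with $x^{n+1-k}$, after reindexing $j=k-1$, to produce $(qx)^{n-j}$; together with the same manipulation $u^{\binom{j+1}{2}}=u^{\binom{j}{2}}u^{j}$ this yields $y\,\rr_{n}(qx,uy;u|q)$. I expect no real obstacle: once the identity $\binom{j+1}{2}=\binom{j}{2}+j$ is in hand, each recurrence is a two-line reindexing, and the only thing to watch is keeping the powers of $q$ and $u$ attached to the correct variable.
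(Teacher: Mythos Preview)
Your proposal is correct and follows essentially the same approach as the paper: both arguments reduce to the Pascal identities in Eq.~(\ref{eqn_pascal}) together with the exponent shift $\binom{j+1}{2}=\binom{j}{2}+j$. The only cosmetic difference is direction---the paper expands the right-hand side and combines via Pascal to reach $\rr_{n+1}$, whereas you start from $\rr_{n+1}$ and split---and you also write out the second recurrence explicitly, which the paper leaves implicit.
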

\begin{proof}
By Eq. \eqref{eqn_def_rs},
\begin{align*}
    &x\opR_{n}(x,qy;u|q)+y\opR_{n}(x,uy;u|q)\\
    &\hspace{1cm}=\sum_{k=0}^{n}\qbinom{n}{k}_{q}q^ku^{\binom{k}{2}}x^{n+1-k}y^k+\sum_{k=0}^{n}\qbinom{n}{k}_{q}u^{\binom{k+1}{2}}x^{n-k}y^{k+1}\\
    &\hspace{1cm}=\sum_{k=0}^{n}\qbinom{n}{k}_{q}q^ku^{\binom{k}{2}}x^{n+1-k}y^k+\sum_{k=1}^{n+1}\qbinom{n}{k-1}_{q}u^{\binom{k}{2}}x^{n+1-k}y^{k}\\
    &\hspace{1cm}=x^{n+1}+u^{\binom{n+1}{2}}y^{n+1}+\sum_{k=1}^{n}\qbinom{n}{k}_{q}q^ku^{\binom{k}{2}}x^{n+1-k}y^k+\sum_{k=1}^{n}\qbinom{n}{k-1}_{q}u^{\binom{k}{2}}x^{n+1-k}y^{k}.
\end{align*}
Now, by using Eq.~\eqref{eqn_pascal} we have,
\begin{align*}
    &x\opR_{n}(x,qy;u|q)+y\opR_{n}(x,uy;u|q)\\
    &\hspace{1cm}=x^{n+1}+u^{\binom{n+1}{2}}y^{n+1}+\sum_{k=1}^{n}\bigg\{\qbinom{n}{k}_{q}q^k+\qbinom{n}{k-1}_{q}\bigg\}u^{\binom{k}{2}}x^{n+1-k}y^k\\
    &\hspace{1cm}=x^{n+1}+u^{\binom{n+1}{2}}y^{n+1}+\sum_{k=1}^{n}\qbinom{n+1}{k}_{q}u^{\binom{k}{2}}x^{n+1-k}y^k\\
    &\hspace{1cm}=\sum_{k=0}^{n+1}\qbinom{n+1}{k}_{q}u^{\binom{k}{2}}x^{n+1-k}y^k\\
    &\hspace{1cm}=\opR_{n+1}(x,y;u|q).
\end{align*}
The proof is reached.
\end{proof}
By iterating the above theorem, we have the following result.
\begin{theorem}
For $m\geq0$,
\begin{equation}
    \opR_{n+m}(x,y;u|q)=\sum_{k=0}^{m}\qbinom{m}{k}_{q}u^{\binom{k}{2}}x^{m-k}y^{k}\opR_{n}(x,q^{m-k}u^ky;u|q).
\end{equation}    
\end{theorem}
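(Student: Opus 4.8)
The plan is to prove the identity by induction on $m$, treating the second argument as a free variable so that the hypothesis can be re-used after the substitutions $y\mapsto qy$ and $y\mapsto uy$. The base case $m=0$ is immediate: the right-hand side collapses to the single term $\qbinom{0}{0}_{q}u^{0}x^{0}y^{0}\rr_{n}(x,y;u|q)=\rr_{n}(x,y;u|q)$. Since the target formula perturbs only the $y$-slot while leaving the $x$-slot untouched, I would use the first of the two recurrences of the preceding theorem, namely $\rr_{N+1}(x,y;u|q)=x\rr_{N}(x,qy;u|q)+y\rr_{N}(x,uy;u|q)$, rather than the second.

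For the inductive step I would assume the formula for $m$ (with an arbitrary second argument) and write $\rr_{n+m+1}=\rr_{(n+m)+1}$. Applying the recurrence at level $N=n+m$ gives $\rr_{n+m+1}(x,y;u|q)=x\rr_{n+m}(x,qy;u|q)+y\rr_{n+m}(x,uy;u|q)$. I then substitute the inductive hypothesis into each of the two terms. The first term produces $\sum_{k=0}^{m}\qbinom{m}{k}_{q}q^{k}u^{\binom{k}{2}}x^{m+1-k}y^{k}\rr_{n}(x,q^{m+1-k}u^{k}y;u|q)$, where the extra factor $q^{k}$ comes from $(qy)^{k}$ and the inner argument picks up one additional power of $q$; the second term produces $\sum_{k=0}^{m}\qbinom{m}{k}_{q}u^{\binom{k}{2}+k}x^{m-k}y^{k+1}\rr_{n}(x,q^{m-k}u^{k+1}y;u|q)$.

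The remaining work is purely bookkeeping. In the second sum I would shift $k\mapsto k-1$, using $\binom{k-1}{2}+(k-1)=\binom{k}{2}$ to rewrite the power of $u$, so that both sums carry the common factor $u^{\binom{k}{2}}x^{m+1-k}y^{k}\rr_{n}(x,q^{m+1-k}u^{k}y;u|q)$; the two inner arguments then coincide. The coefficient of this common factor is $\qbinom{m}{k}_{q}q^{k}$ from the first sum and $\qbinom{m}{k-1}_{q}$ from the shifted second sum, and the $q$-Pascal rule (\ref{eqn_pascal}) in the form $q^{k}\qbinom{m}{k}_{q}+\qbinom{m}{k-1}_{q}=\qbinom{m+1}{k}_{q}$ collapses them to $\qbinom{m+1}{k}_{q}$; the boundary terms $k=0$ and $k=m+1$ are handled by the conventions $\qbinom{m+1}{0}_{q}=\qbinom{m+1}{m+1}_{q}=1$. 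This yields exactly the claimed expression with $m+1$ in place of $m$. The only place demanding care is aligning the inner arguments $q^{m+1-k}u^{k}y$ across the two sums after the index shift — this is what forces the particular combination of $q$- and $u$-powers appearing in the statement — but once that alignment is verified, the Pascal step is mechanical.
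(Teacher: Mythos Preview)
Your proof is correct and is exactly what the paper has in mind: the paper merely states ``By iterating the above theorem, we have the following results'' without spelling out the details, and your induction on $m$ using the first recurrence $\rr_{N+1}(x,y;u|q)=x\rr_{N}(x,qy;u|q)+y\rr_{N}(x,uy;u|q)$ together with the $q$-Pascal identity is precisely that iteration made explicit. The index-shift and the verification $\binom{k-1}{2}+(k-1)=\binom{k}{2}$ are the only nontrivial bookkeeping, and you have handled them correctly.
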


\subsection{Deformed q-exponential function as limit of deformed homogeneous polynomials}

\begin{theorem}
If $0<\vert q\vert<1$, then
    \begin{align}
        \lim_{n\rightarrow\infty}\opR_{n}(1,x;u|q)&=\ope_{q}(x,u).
    \end{align}
\end{theorem}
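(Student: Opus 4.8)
The plan is to prove the statement by a termwise passage to the limit, justified by dominated convergence for series (Tannery's theorem). First I would record the explicit form $\rr_n(1,x;u|q)=\sum_{k=0}^{n}\qbinom{n}{k}_q u^{\binom{k}{2}}x^{k}$, which follows from the reduction in Eq.(\ref{eqn_def_rs}), and compare it termwise with $\e_q(x,u)=\sum_{k=0}^{\infty}u^{\binom{k}{2}}x^k/(q;q)_k$. Extending $\rr_n(1,x;u|q)$ to an infinite sum by declaring the $k$-th coefficient to be $0$ for $k>n$, the whole statement reduces to interchanging $\lim_{n\to\infty}$ with the summation over $k\ge 0$.

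Second, I would establish the pointwise limit of the coefficients. Writing $\qbinom{n}{k}_q=\frac{(q;q)_n}{(q;q)_k(q;q)_{n-k}}$ and using the factorization (\ref{eqn_iden2}) in the form $(q;q)_n=(q;q)_{n-k}(q^{n-k+1};q)_k$, I get $\qbinom{n}{k}_q=\frac{(q^{n-k+1};q)_k}{(q;q)_k}$. For fixed $k$ and $0<|q|<1$ we have $q^{n-k+1}\to 0$ as $n\to\infty$, so $(q^{n-k+1};q)_k\to (0;q)_k=1$, and hence $\qbinom{n}{k}_q\to \frac{1}{(q;q)_k}$. Thus the $k$-th term of $\rr_n(1,x;u|q)$ converges to $u^{\binom{k}{2}}x^k/(q;q)_k$, which is exactly the $k$-th term of $\e_q(x,u)$.

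Third, and this is the delicate step, I would produce an $n$-independent summable majorant. From the same factorization, $|(q^{n-k+1};q)_k|\le\prod_{j=1}^{k}(1+|q|^{n-k+j})\le (-|q|;|q|)_\infty=:C<\infty$, so $\big|\qbinom{n}{k}_q\big|\le C/|(q;q)_k|$ uniformly in $n$. Hence the $k$-th summand is bounded by $b_k:=C\,|u|^{\binom{k}{2}}|x|^k/|(q;q)_k|$. Since $|(q;q)_k|$ is bounded below by $|(q;q)_\infty|>0$, the series $\sum_k b_k$ is, up to a constant, the Sokal series $\e_q(|x|,|u|)$, and it converges precisely in the region where $\e_q(x,u)$ itself is defined (all $x$ when $|u|<1$, and $|x|<1$ when $|u|=1$, which covers the cases $u=1,q,\sqrt q,q^2$). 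With a convergent majorant in hand, Tannery's theorem allows the exchange of limit and sum, yielding $\lim_{n\to\infty}\rr_n(1,x;u|q)=\sum_{k=0}^{\infty}u^{\binom{k}{2}}x^k/(q;q)_k=\e_q(x,u)$.

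The main obstacle is precisely the uniform control in the third step: the number of summands grows with $n$, so one cannot simply pass to the limit term by term without the dominating bound, and one must verify that the resulting majorant converges in the intended domain of $x$ and $u$. For real $0<q<1$ the situation simplifies, since the cleaner estimate $\qbinom{n}{k}_q\le 1/(q;q)_k$ holds (the product $(q^{n-k+1};q)_k$ being a product of factors in $(0,1)$), making the majorant exactly the positive Sokal series and streamlining the argument.
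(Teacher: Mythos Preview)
Your proof takes the same route as the paper: both establish the termwise limit $\qbinom{n}{k}_{q}\to 1/(q;q)_{k}$ and conclude by summing over $k$. The paper's proof is in fact just your second step, stated without any justification for interchanging $\lim_{n\to\infty}$ with the sum; your addition of a Tannery/dominated convergence argument supplies the rigor the paper omits. One small wording issue: for complex $q$ the inequality $|(q;q)_{k}|\ge |(q;q)_{\infty}|$ need not hold literally, but what you actually need---that $\inf_{k}|(q;q)_{k}|>0$---does hold since the infinite product converges to a nonzero limit, so the majorant argument goes through.
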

\begin{proof}
As
\begin{equation*}
    \lim_{n\rightarrow\infty}\qbinom{n}{k}_{q}=\frac{1}{(q;q)_{k}},
\end{equation*}
then
\begin{equation*}
    \lim_{n\rightarrow\infty}\opR_{n}(1,x;u|q)=\lim_{n\rightarrow\infty}\sum_{k=0}^{n}\qbinom{n}{k}_{q}u^{\binom{k}{2}}x^k=\sum_{k=0}^{\infty}u^{\binom{k}{2}}\frac{x^k}{(q;q)_{k}}=\ope_{q}(x,u).
\end{equation*}
\end{proof}

\subsection{Deformed basic hypergeometric representation}

\begin{theorem}
For all $n\geq0$ and $x\neq0$,
    \begin{equation}
    \opR_{n}(x,y;u|q)=x^n{}_{2}\Phi_{0}\left(
    \begin{array}{c}
         q^{-n},0 \\
         -
    \end{array}
    ;q,u,q^n y/x    
    \right).
\end{equation}
\end{theorem}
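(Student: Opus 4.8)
The plan is to start from the definition
\[
\rr_{n}(1,x;u|q)=\sum_{k=0}^{n}\qbinom{n}{k}_{q}u^{\binom{k}{2}}x^{k}
\]
and rewrite the summand so that it matches the general term of a ${}_{2}\Phi_{0}$-series with parameters $q^{-n},0$ and argument $q^{n}x$. The right-hand side, read off from the definition of ${}_{r}\Phi_{s}$ with $r=2$, $s=0$, is
\[
{}_{2}\Phi_{0}\left(\begin{array}{c}q^{-n},0\\-\end{array};q,u,q^{n}x\right)
=\sum_{k=0}^{\infty}u^{\binom{k}{2}}\frac{(q^{-n},0;q)_{k}}{(q;q)_{k}}\Big[(-1)^{k}q^{\binom{k}{2}}\Big]^{-1}(q^{n}x)^{k},
\]
since $1+s-r=-1$ here. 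Because $(0;q)_{k}=1$ and $(q^{-n};q)_{k}=0$ for $k>n$, the sum truncates automatically at $k=n$, giving the correct polynomial degree.

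The key step is the standard identity already recorded in the excerpt,
\[
\qbinom{n}{k}_{q}=\frac{(q^{-n};q)_{k}}{(q;q)_{k}}(-q^{n})^{k}q^{-\binom{k}{2}},
\]
which I would substitute directly into the definition of $\rr_{n}$. This converts $\qbinom{n}{k}_{q}u^{\binom{k}{2}}x^{k}$ into
\[
u^{\binom{k}{2}}\frac{(q^{-n};q)_{k}}{(q;q)_{k}}(-1)^{k}q^{nk}q^{-\binom{k}{2}}x^{k},
\]
and the plan is simply to verify that this equals the $k$-th term of the ${}_{2}\Phi_{0}$-series displayed above. The factor $[(-1)^{k}q^{\binom{k}{2}}]^{-1}=(-1)^{k}q^{-\binom{k}{2}}$ supplies precisely the $(-1)^{k}q^{-\binom{k}{2}}$, the $(q^{n}x)^{k}=q^{nk}x^{k}$ supplies the remaining powers, and $u^{\binom{k}{2}}(q^{-n};q)_{k}/(q;q)_{k}$ matches term for term. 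Matching the signs and the $q^{\binom{k}{2}}$ bookkeeping is the only place where an error can creep in, so I expect the main (though modest) obstacle to be tracking the exponent $1+s-r=-1$ correctly, since it inverts the usual $(-1)^{k}q^{\binom{k}{2}}$ factor rather than multiplying by it.

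Once the summands are shown to coincide for each $k$, the truncation at $k=n$ (from $(q^{-n};q)_{k}=0$) shows the infinite ${}_{2}\Phi_{0}$-sum reduces to the finite sum defining $\rr_{n}(1,x;u|q)$, completing the identification. No convergence questions arise because the series is a polynomial.
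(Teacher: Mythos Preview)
Your proposal is correct and is essentially the same as the paper's proof: both use the identity $\qbinom{n}{k}_{q}=\dfrac{(q^{-n};q)_{k}}{(q;q)_{k}}(-q^{n})^{k}q^{-\binom{k}{2}}$ to match the summand of $\rr_{n}(1,x;u|q)$ with the general term of the ${}_{2}\Phi_{0}$-series, together with the truncation coming from $(q^{-n};q)_{k}=0$ for $k>n$. The only difference is that the paper expands the ${}_{2}\Phi_{0}$ side and recognises $\rr_{n}$, whereas you start from $\rr_{n}$ and recognise ${}_{2}\Phi_{0}$, which is the same one-line computation read in the opposite direction.
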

\begin{proof}
    \begin{align*}
        x^n{}_{2}\Phi_{0}\left(
    \begin{array}{c}
         q^{-n},0 \\
         -
    \end{array}
    ;q,u,q^n y/x  
    \right)
    &=x^n\sum_{k=0}^{\infty}u^{\binom{k}{2}}\frac{(q^{-n};q)_{k}}{(q;q)_{k}}(-1)^kq^{-\binom{k}{2}}(q^ny/x)^k\\
    &=\sum_{k=0}^{n}\qbinom{n}{k}_{q}u^{\binom{k}{2}}x^{n-k}y^k.
    \end{align*}
\end{proof}
The basic hypergeometric representation of polynomials Eqs.~\eqref{eqn_poly1}-(\ref{eqn_poly5}) are
\begin{align*}
    \opr_{n}(x,y)&=x^n{}_{2}\Phi_{0}\left(
    \begin{array}{c}
         q^{-n},0 \\
         -
    \end{array}
    ;q,1,q^n y/x    
    \right)=x^n{}_{2}\phi_{0}\left(
    \begin{array}{c}
         q^{-n},0 \\
         -
    \end{array}
    ;q,q^n y/x    
    \right).\\
    \oph_{n}(x|q^{-1})&=q^{\binom{n}{2}}{}_{2}\Phi_{0}\left(
    \begin{array}{c}
         q^{-n},0 \\
         -
    \end{array}
    ;q,q^2,qy    
    \right)=q^{\binom{n}{2}}{}_{1}\phi_{1}\left(
    \begin{array}{c}
         q^{-n},0 \\
         -
    \end{array}
    ;q, qy    
    \right).\\
    (x;q)_{n}&={}_{2}\Phi_{0}\left(
    \begin{array}{c}
         q^{-n},0 \\
         -
    \end{array}
    ;q,q,-q^{n}x    
    \right)={}_{2}\phi_{1}\left(
    \begin{array}{c}
         q^{-n},0 \\
         0
    \end{array}
    ;q,q^{n}x
    \right).\\
    \opS_{n}(x,y;q)&=x^n{}_{2}\Phi_{0}\left(
    \begin{array}{c}
         q^{-n},0 \\
         -
    \end{array}
    ;q,q^2,q^{n+1}y/x
    \right)=x^n{}_{1}\phi_{1}\left(
    \begin{array}{c}
         q^{-n} \\
         0
    \end{array}
    ;q,-q^{n+1}y/x    
    \right).\\
    \opE_{n}(x,y;q)&=x^{n
    }{}_{2}\Phi_{0}\left(
    \begin{array}{c}
         q^{-n},0 \\
         -
    \end{array}
    ;q,\sqrt{q},q^{n}y/x    
    \right)\\
    &=x^n{}_{4}\phi_{2}\left(
    \begin{array}{c}
         q^{-n/2},-q^{-n/2},q,0 \\
         \sqrt{q},-\sqrt{q}
    \end{array}
    ;\sqrt{q},q^{n}y/x    
    \right).
\end{align*}

\subsection{The q-difference equation}

\begin{theorem}
The polynomials $y(x)=\opR_{n}(1,x;u|q)$ are solutions of the proportional functional difference equation
\begin{equation}\label{eqn_gpfde}
    (D_{q}y)(u^{-1}x)+q^{n-1}x(D_{q}y)(q^{-1}x)=(1-q^n)y(x),
\end{equation}
with initial value $y(0)=1$. 
\end{theorem}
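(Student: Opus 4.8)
The plan is to verify the identity directly at the level of the power-series coefficients of $y(x)=\rr_{n}(1,x;u|q)=\sum_{k=0}^{n}\qbinom{n}{k}_{q}u^{\binom{k}{2}}x^{k}$. First I would record the elementary fact $D_{q}x^{k}=(1-q^{k})x^{k-1}$, which is the special case of Eq.(\ref{eqn_iden5}), and apply it termwise to obtain
\begin{equation*}
    (D_{q}y)(z)=\sum_{k=1}^{n}\qbinom{n}{k}_{q}u^{\binom{k}{2}}(1-q^{k})z^{k-1}.
\end{equation*}
The initial value $y(0)=1$ is immediate from the surviving $k=0$ term of $y$.

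The core of the argument is to substitute the two dilated arguments and read off the coefficient of $x^{k}$ on each side. Evaluating at $z=u^{-1}x$ and using the exponent identity $\binom{k}{2}-(k-1)=\binom{k-1}{2}$ (the special case $n\mapsto k$, $k\mapsto 1$ of the binomial identity recorded in the introduction), the factor $u^{\binom{k}{2}}u^{-(k-1)}$ collapses to $u^{\binom{k-1}{2}}$, so after reindexing $(D_{q}y)(u^{-1}x)$ contributes $\qbinom{n}{k+1}_{q}(1-q^{k+1})$ times $u^{\binom{k}{2}}$ to the coefficient of $x^{k}$. Evaluating at $z=q^{-1}x$ and multiplying by $q^{n-1}x$ contributes $q^{n-k}(1-q^{k})\qbinom{n}{k}_{q}$ times $u^{\binom{k}{2}}$ to the coefficient of $x^{k}$. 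Since every contribution carries the common factor $u^{\binom{k}{2}}$, the entire equation separates into the $u$-free scalar identities
\begin{equation*}
    \qbinom{n}{k+1}_{q}(1-q^{k+1})+q^{n-k}(1-q^{k})\qbinom{n}{k}_{q}=(1-q^{n})\qbinom{n}{k}_{q},\qquad 0\le k\le n,
\end{equation*}
with the convention $\qbinom{n}{n+1}_{q}=0$.

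It then remains to check this single $q$-binomial identity, which I would do by dividing through by $\qbinom{n}{k}_{q}$ and using the ratio $\qbinom{n}{k+1}_{q}/\qbinom{n}{k}_{q}=(1-q^{n-k})/(1-q^{k+1})$; the left-hand side becomes $(1-q^{n-k})+q^{n-k}(1-q^{k})=1-q^{n}$, as required (equivalently, the identity follows from the $q$-Pascal rule Eq.(\ref{eqn_pascal})). I do not anticipate a genuine obstacle: the only step demanding care is the bookkeeping of the $u$-powers under the dilations $x\mapsto u^{-1}x$ and $x\mapsto q^{-1}x$, ensuring that the shift $\binom{k}{2}-(k-1)=\binom{k-1}{2}$ is applied to the correct summand so that $u^{\binom{k}{2}}$ factors out uniformly and the problem reduces to the $u$-independent $q$-binomial relation above.
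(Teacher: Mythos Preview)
Your argument is correct and is essentially the same direct coefficient verification as the paper's. The only cosmetic difference is that the paper first records the lowering identity $D_{q}\rr_{n}(1,x;u|q)=(1-q^{n})\rr_{n-1}(1,ux;u|q)$ and then expands the two dilated evaluations of $\rr_{n-1}$, whereas you expand $D_{q}y$ term by term from the outset; both routes end with the same $q$-Pascal recombination.
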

\begin{proof}
As
\begin{equation*}
    D_{q}\opR_{n}(1,x;u|q)=(1-q^n)\opR_{n-1}(1,ux;u|q),
\end{equation*}
then the left side of Eq.~\eqref{eqn_gpfde} is equal to
\begin{align*}
    &(D_{q}y)(u^{-1}x)+q^{n-1}x(D_{q}y)(q^{-1}x)\\
    &\hspace{1cm}=(1-q^n)\opR_{n-1}(1,x;u|q)+q^{n-1}(1-q^n)x\opR_{n-1}(1,uq^{-1}x;u|q)\\
    &\hspace{1cm}=(1-q^n)\bigg(\sum_{k=0}^{n-1}\qbinom{n-1}{k}_{q}u^{\binom{k}{2}}x^k+\sum_{k=1}^{n}\qbinom{n-1}{k-1}_{q}u^{\binom{k}{2}}q^{n-k}x^{k}\bigg)\\
    &\hspace{1cm}=(1-q^n)\sum_{k=0}^{n}\qbinom{n}{k}_{q}u^{\binom{k}{2}}x^{k}=(1-q^n)\opR_{n}(1,x;u|q).
\end{align*}
\end{proof}
Eq.~\eqref{eqn_gpfde} is equivalent to
\begin{equation}
    f(u^{-1}x)-f(qu^{-1}x)+q^{n}u^{-1}xf(q^{-1}x)-u^{-1}xf(x)=0.
\end{equation}

\section{The deformed q-exponential operator}

\begin{definition}\label{def_translation}
Set $u\in\C$. Define the $u$-deformed $q$-exponential operator $\opE(yD_{q}|u)$ by letting 
\begin{equation}
    \opE(yD_{q}|u)=\sum_{n=0}^{\infty}u^{\binom{n}{2}}\frac{(yD_{q})^{n}}{(q;q)_{n}}. 
\end{equation}
\end{definition}
The $u$-deformed $q$-exponential operator $\opE(yD_{q}|u)$ generalizes the operators given by Chen \cite{chen2}:
\begin{equation}
    \opT(bD_{q})=\sum_{n=0}^{\infty}\frac{(bD_{q})^n}{(q;q)_{n}}=\opE(bD_{q}|1),
\end{equation}
and by Saad et al. \cite{saad2}:
\begin{equation}
    \opR(bD_{q})=\sum_{n=0}^{\infty}(-1)^{n}\frac{(bD_{q})^nq^{\binom{n}{2}}}{(q;q)_{n}}=\opE(-bD_{q}|q).
\end{equation}
When $u=\sqrt{q}$, we define the Exton operator $\mathcal{E}(yD_{q})$ as
\begin{equation}
    \mathcal{E}(yD_{q})=\opE(yD_{q}|\sqrt{q})=\sum_{n=0}^{\infty}\sqrt{q}^{\binom{n}{2}}\frac{(yD_{q})^n}{(q;q)_{n}}={}_{1}\phi_{1}\left(\begin{array}{c}
         0\\
         -\sqrt{q}
    \end{array};\sqrt{q},-yD_{q}\right).
\end{equation}
When $u=q^2$ and by the mapping $y\mapsto qy$, we define the Rogers-Ramanujan operator $\mathcal{R}(yD_{q})$ as
\begin{equation}
    \mathcal{R}(yD_{q})=\opE(qyD_{q}|q^2)=\sum_{n=0}^{\infty}q^{n^2}\frac{(yD_{q})^n}{(q;q)_{n}}.
\end{equation}
The deformed homogeneous polynomials $\opR_{n}(x,y;u|q)$ can be represented by the deformed $q$-exponential operator $\opE(yD_{q}|u)$ as follows:
\begin{proposition}\label{prop_translation}
For all $u,v\in\C$
    \begin{align}
        \opE(yD_{q}|u)\big\{x^{n}\big\}&=\opR_{n}(x,y;u|q).
    \end{align}
\end{proposition}
\begin{proof}
By applying the deformed $q$-exponential operator, we get
    \begin{align*}
        \opE(yD_{q}|u)\big\{x^{n}\big\}
        &=\sum_{k=0}^{\infty}u^{\binom{k}{2}}\frac{y^{k}}{(q;q)_{k}}D_{q}^{k}\{x^{n}\}\\
        &=\sum_{k=0}^{\infty}\qbinom{n}{k}_{q}u^{\binom{k}{2}}x^{n-k}y^{k}\\
        &=\opR_{n}(x,y;u|q),
    \end{align*}
which implies the statement.
\end{proof}
Below, we have some $q$-operator identities.
\begin{theorem}\label{theo_opeE_exp}
If $u,v\in\C$, $u\neq0$, then
    \begin{equation}\label{eqn_opeE_exp}
        \opE(yD_{q}|u)\big\{\ope_{q}(ax,v)\big\}=\sum_{k=0}^{\infty}\left(uv\right)^{\binom{k}{2}}\frac{(ay)^k}{(q;q)_{k}}\ope_{q}\left(av^{k}x,v\right).
    \end{equation}
\end{theorem}
\begin{proof}
From Eq.~\eqref{eqn_kder_basic},
    \begin{align*}
        \opE(yD_{q}|u)\big\{\ope_{q}(ax,v)\big\}
        &=\sum_{k=0}^{\infty}\frac{u^{\binom{k}{2}}y^{k}}{(q;q)_{k}}D_{q}^k\{\ope_{q}(ax,v)\}\\
        &=\sum_{k=0}^{\infty}u^{\binom{k}{2}}\frac{y^k}{(q;q)_{k}}a^{k}v^{\binom{k}{2}}\ope_{q}\left(av^kx,v\right).
    \end{align*}
The proof is completed.
\end{proof}
If $v=1$ in Eq.~\eqref{eqn_opeE_exp}, then
\begin{align}\label{eqn_ope_EI}
    \opE(yD_{q}|u)\bigg\{\frac{1}{(ax;q)_{\infty}}\bigg\}&=\frac{\ope_{q}(ay,u)}{(ax;q)_{\infty}},
\end{align}
for $\vert ax\vert<1$. If $v=q$ in Eq.~\eqref{eqn_opeE_exp}, then
\begin{align}\label{eqn_ope_EII}
    \opE(yD_{q}|u)\{(ax;q)_{\infty}\}&=(ax;q)_{\infty}\cdot{}_{2}\Phi_{1}\left(
        \begin{array}{c}
             0,0 \\
             ax
        \end{array};
        q,qu, a y
        \right),
\end{align}
for $\vert ax\vert<1$.

\begin{theorem}\label{theo_opeE_prod_exp}
If $u,v,w\in\C$, $u\neq0$, then
    \begin{multline}
        \opE(yD_{q}|u)\{\ope_{q}(ax,v)\ope_{q}(bx,w)\}\\
        =\sum_{k=0}^{\infty}\sum_{n=0}^{\infty}(uv)^{\binom{k}{2}}(uw)^{\binom{n}{2}}\frac{(ay)^k}{(q;q)_{k}}\frac{(u^kby)^{n}}{(q;q)_{n}}\ope_{q}(av^kx,v)\ope_{q}(bq^{k}w^{n}x,w).
    \end{multline}
\end{theorem}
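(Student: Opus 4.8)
The plan is to expand $\T(yD_{q}|u)$ by its definition and reduce the computation of $D_{q}^{m}$ on the product $\e_{q}(ax,v)\e_{q}(bx,w)$ to the Leibniz-type formula already recorded in Section 2. First I would write
\[
\T(yD_{q}|u)\{\e_{q}(ax,v)\e_{q}(bx,w)\}=\sum_{m=0}^{\infty}u^{\binom{m}{2}}\frac{y^{m}}{(q;q)_{m}}D_{q}^{m}\{\e_{q}(ax,v)\e_{q}(bx,w)\},
\]
and then substitute the expansion of $D_{q}^{m}$ of the product obtained from the Leibniz rule Eq.(\ref{eqn_leibniz}) together with the basic derivative Eq.(\ref{eqn_kder_basic}). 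With deformation parameters $v$ and $w$ (that is, relabelling the $u,v$ of the Section~2 display as $v,w$), this reads as a finite sum over $j=0,\dots,m$ whose summand is $\qbinom{m}{j}_{q}v^{\binom{j}{2}}w^{\binom{m-j}{2}}a^{j}b^{m-j}\e_{q}(av^{j}x,v)\e_{q}(bq^{j}w^{m-j}x,w)$.

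The heart of the argument is then purely a change of summation index. Using $\frac{1}{(q;q)_{m}}\qbinom{m}{j}_{q}=\frac{1}{(q;q)_{j}(q;q)_{m-j}}$, I would set $k=j$ and $n=m-j$, so that $m=n+k$ and the constrained double sum over $0\le j\le m<\infty$ becomes the free double sum over $k,n\ge0$ appearing in the statement; the assignment $(m,j)\mapsto(k,n)=(j,m-j)$ is a bijection onto $\{(k,n):k,n\ge0\}$. The only exponent coupling the two indices is $u^{\binom{m}{2}}=u^{\binom{n+k}{2}}$, and here I would invoke the identity $\binom{n+k}{2}=\binom{n}{2}+\binom{k}{2}+nk$ from the introduction to split it as $u^{\binom{k}{2}}u^{\binom{n}{2}}u^{nk}$.

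After this substitution the remaining work is cosmetic: pairing $u^{\binom{k}{2}}v^{\binom{k}{2}}=(uv)^{\binom{k}{2}}$ and $u^{\binom{n}{2}}w^{\binom{n}{2}}=(uw)^{\binom{n}{2}}$, absorbing $a^{k}y^{k}$ and $b^{n}y^{n}$ into $(ay)^{k}$ and $(by)^{n}$, and crucially using $u^{nk}=(u^{k})^{n}$ to merge the cross term with $(by)^{n}$ into $(u^{k}by)^{n}$. The factors $\e_{q}(av^{j}x,v)$ and $\e_{q}(bq^{j}w^{m-j}x,w)$ become $\e_{q}(av^{k}x,v)$ and $\e_{q}(bq^{k}w^{n}x,w)$ under the reindexing, so the summand matches the claimed identity exactly.

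I expect the main obstacle to be bookkeeping rather than anything conceptual: keeping the two deformation parameters $v,w$ distinct from the operator parameter $u$ throughout, and making sure the cross-factor $u^{nk}$ lands on the correct index so that it combines into $u^{k}by$ inside the coefficient $(u^{k}by)^{n}$ rather than contaminating the $k$-sum. A secondary point worth a brief remark is the interchange of the outer $m$-sum with the finite inner $j$-sum and the subsequent reindexing into an unconstrained double series; this is justified by absolute convergence for $0<|q|<1$ in the relevant range of the parameters, so I would note it rather than belabour it.
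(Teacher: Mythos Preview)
Your proposal is correct and follows essentially the same route as the paper: expand the operator by its definition, apply the Section~2 Leibniz-type formula for $D_{q}^{m}\{\e_{q}(ax,v)\e_{q}(bx,w)\}$, and then reindex the triangular double sum via $\binom{n+k}{2}=\binom{n}{2}+\binom{k}{2}+nk$ so that $u^{nk}$ is absorbed into $(u^{k}by)^{n}$. The only difference is notational (you use $m,j$ where the paper reuses $n,k$), and your remark on absolute convergence is a welcome addition the paper omits.
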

\begin{proof}
    \begin{align*}
        &\opE(yD_{q}|u)\{\ope_{q}(ax,v)\ope_{q}(bx,w)\}\\
        &\hspace{1cm}=\sum_{n=0}^{\infty}u^{\binom{n}{2}}\frac{y^n}{(q;q)_{n}}D_{q}^{n}\{\ope_{q}(ax,v)\ope_{q}(bx,w)\}\\
        &\hspace{1cm}=\sum_{n=0}^{\infty}u^{\binom{n}{2}}\frac{y^n}{(q;q)_{n}}\sum_{k=0}^{n}\qbinom{n}{k}_{q}a^kv^{\binom{k}{2}}b^{n-k}w^{\binom{n-k}{2}}\ope_{q}(av^kx,v)\ope_{q}(bq^{k}w^{n-k}x,w)\\
        &\hspace{1cm}=\sum_{k=0}^{\infty}(uv)^{\binom{k}{2}}\frac{(ay)^k}{(q;q)_{k}}\ope_{q}(av^kx,v)\sum_{n=0}^{\infty}\frac{(uw)^{\binom{n}{2}}(u^kby)^{n}}{(q;q)_{n}}\ope_{q}(bq^{k}w^{n}x,w)\\
        &\hspace{1cm}=\sum_{k=0}^{\infty}\sum_{n=0}^{\infty}(uv)^{\binom{k}{2}}(uw)^{\binom{n}{2}}\frac{(ay)^k}{(q;q)_{k}}\frac{(u^kby)^{n}}{(q;q)_{n}}\ope_{q}(av^kx,v)\ope_{q}(bq^{k}w^{n}x,w).
    \end{align*}
\end{proof}
We have some specializations of the above theorem:
\begin{corollary}\label{coro1}
If $v=w=1$, then
\begin{equation}
    \opE(yD_{q}|u)\left\{\frac{1}{(ax,bx;q)_{\infty}}\right\}
        =\frac{1}{(ax,bx;q)_{\infty}}\sum_{k=0}^{\infty}u^{\binom{k}{2}}\frac{(bx;q)_{k}(ay)^k}{(q;q)_{k}}\ope_{q}(u^kby,u).
\end{equation}
\end{corollary}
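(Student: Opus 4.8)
The plan is to obtain this identity as a direct specialization of Theorem~\ref{theo_opeE_prod_exp}, setting $v=w=1$. Recall from the definitions that $\e_{q}(z,1)=1/(z;q)_{\infty}$, so the left-hand side of that theorem collapses to $\T(yD_{q}|u)\{1/(ax,bx;q)_{\infty}\}$, which is exactly the left-hand side we want. The work therefore lies entirely in simplifying the double sum on the right.

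First I would substitute $v=w=1$ into the double sum. The prefactors $(uv)^{\binom{k}{2}}$ and $(uw)^{\binom{n}{2}}$ become $u^{\binom{k}{2}}$ and $u^{\binom{n}{2}}$, the factor $\e_{q}(av^{k}x,v)$ becomes the $k$-independent $1/(ax;q)_{\infty}$, and $\e_{q}(bq^{k}w^{n}x,w)$ becomes $1/(bq^{k}x;q)_{\infty}$. Pulling $1/(ax;q)_{\infty}$ out of both sums leaves
\[
\frac{1}{(ax;q)_{\infty}}\sum_{k=0}^{\infty}u^{\binom{k}{2}}\frac{(ay)^{k}}{(q;q)_{k}}\frac{1}{(bq^{k}x;q)_{\infty}}\sum_{n=0}^{\infty}u^{\binom{n}{2}}\frac{(u^{k}by)^{n}}{(q;q)_{n}}.
\]

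The next two steps are the crux. The inner sum over $n$ is, by the very definition of the deformed $q$-exponential, equal to $\e_{q}(u^{k}by,u)$. For the remaining $k$-dependent factor $1/(bq^{k}x;q)_{\infty}$ I would apply identity~(\ref{eqn_iden1}) in the form $(bx;q)_{k}=(bx;q)_{\infty}/(bq^{k}x;q)_{\infty}$, i.e. $1/(bq^{k}x;q)_{\infty}=(bx;q)_{k}/(bx;q)_{\infty}$. This is the single genuinely nontrivial move: it converts the shifted infinite product into a finite $q$-shifted factorial times the unshifted infinite product, which is exactly the structure demanded by the claimed right-hand side.

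Finally, factoring the newly produced $1/(bx;q)_{\infty}$ out of the $k$-sum and combining it with $1/(ax;q)_{\infty}$ via $(ax,bx;q)_{\infty}=(ax;q)_{\infty}(bx;q)_{\infty}$ yields precisely
\[
\frac{1}{(ax,bx;q)_{\infty}}\sum_{k=0}^{\infty}u^{\binom{k}{2}}\frac{(bx;q)_{k}(ay)^{k}}{(q;q)_{k}}\e_{q}(u^{k}by,u),
\]
which is the assertion. No convergence subtleties arise beyond the $\vert x\vert<1$-type restrictions already implicit in Theorem~\ref{theo_opeE_prod_exp}, so the only real obstacle — and it is a mild one — is spotting the reindexing of the infinite product via~(\ref{eqn_iden1}).
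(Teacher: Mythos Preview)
Your proposal is correct and follows exactly the route the paper intends: the corollary is stated in the paper as an immediate specialization of Theorem~\ref{theo_opeE_prod_exp} with $v=w=1$, and your use of the definition $\e_{q}(z,1)=1/(z;q)_{\infty}$, the inner-sum identification $\sum_{n}u^{\binom{n}{2}}(u^{k}by)^{n}/(q;q)_{n}=\e_{q}(u^{k}by,u)$, and identity~(\ref{eqn_iden1}) to rewrite $1/(bq^{k}x;q)_{\infty}$ are precisely the manipulations needed.
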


\begin{corollary}\label{coro2}
If $v=1$, $w=q$ and setting $b\mapsto-b$, then
\begin{equation}\label{eqn_coro2}
    \opE(yD_{q}|u)\left\{\frac{(bx;q)_{\infty}}{(ax;q)_{\infty}}\right\}\\
        =\frac{(bx;q)_{\infty}}{(ax;q)_{\infty}}\sum_{k=0}^{\infty}\frac{u^{\binom{k}{2}}(ay)^k}{(q;q)_{k}(bx;q)_{k}}{}_{1}\Phi_{1}\left(
        \begin{array}{c}
             0 \\
             bxq^k
        \end{array};
        q,u,u^kby
        \right).
\end{equation}
\end{corollary}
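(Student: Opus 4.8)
The plan is to obtain this corollary as a direct specialization of Theorem~\ref{theo_opeE_prod_exp}, which evaluates $\T(yD_{q}|u)\{\e_{q}(ax,v)\e_{q}(bx,w)\}$ as a double sum. First I would set $v=1$ and $w=q$ and replace $b$ by $-b$. Under $v=1$ one has $\e_{q}(ax,1)=1/(ax;q)_{\infty}$ and $v^{k}=1$, so the factor $\e_{q}(av^{k}x,v)$ collapses to the index-independent constant $1/(ax;q)_{\infty}$, which pulls out of both sums. Under $w=q$ together with $b\mapsto-b$, the relation $\e_{q}(-z,q)=(z;q)_{\infty}$ turns $\e_{q}(bx,w)$ into $(bx;q)_{\infty}$, so the argument of the operator is exactly $(bx;q)_{\infty}/(ax;q)_{\infty}$, matching the left-hand side.

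Next I would simplify the remaining $q$-exponential in the summand. With $w=q$ and $b\mapsto-b$ the factor $\e_{q}(bq^{k}w^{n}x,w)$ becomes $\e_{q}(-bq^{k+n}x,q)=(bq^{k+n}x;q)_{\infty}$, which carries no sign. To split the two summation indices apart I would use identities~(\ref{eqn_iden1}) and~(\ref{eqn_iden2}): writing $(bq^{k+n}x;q)_{\infty}=(bx;q)_{\infty}/(bx;q)_{k+n}$ and then $(bx;q)_{k+n}=(bx;q)_{k}(bxq^{k};q)_{n}$ gives
\[
    (bq^{k+n}x;q)_{\infty}=\frac{(bx;q)_{\infty}}{(bx;q)_{k}(bxq^{k};q)_{n}}.
\]
The factor $(bx;q)_{\infty}$ is independent of both indices and combines with the already-extracted $1/(ax;q)_{\infty}$ to produce the global prefactor $(bx;q)_{\infty}/(ax;q)_{\infty}$; the $1/(bx;q)_{k}$ stays in the outer ($k$) sum, while $1/(bxq^{k};q)_{n}$ is absorbed into the inner ($n$) sum.

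The decisive step is to recognize the inner sum as a ${}_{1}\Phi_{1}$ series. After the substitution the $n$-sum carries the weight $(uw)^{\binom{n}{2}}=(uq)^{\binom{n}{2}}=u^{\binom{n}{2}}q^{\binom{n}{2}}$, the sign $(-1)^{n}$ coming from $(u^{k}by)^{n}\mapsto(-1)^{n}(u^{k}by)^{n}$, and the term $(u^{k}by)^{n}/\big((q;q)_{n}(bxq^{k};q)_{n}\big)$. Comparing with the definition of ${}_{r}\Phi_{s}$ for $r=s=1$ with upper parameter $0$, the exponent $1+s-r=1$ makes the factor $[(-1)^{n}q^{\binom{n}{2}}]$ appear to the first power, so the inner sum is exactly
\[
    {}_{1}\Phi_{1}\!\left(\begin{array}{c} 0 \\ bxq^{k} \end{array};q,u,u^{k}by\right),
\]
with base deformation $u$ and argument $u^{k}by$. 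I would expect the only real pitfall to be this last bookkeeping: one must check that the $q^{\binom{n}{2}}$ arising from $w=q$ and the sign arising from $b\mapsto-b$ assemble precisely into the ${}_{1}\Phi_{1}$ normalization, rather than leaving a stray power of $q$ or a wrong sign. Reassembling the prefactor with the two sums then yields the stated identity.
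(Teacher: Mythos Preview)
Your proposal is correct and follows exactly the route the paper intends: the corollary is stated as the specialization $v=1$, $w=q$, $b\mapsto-b$ of Theorem~\ref{theo_opeE_prod_exp}, and you have carried out precisely that specialization, including the use of identities~(\ref{eqn_iden1}) and~(\ref{eqn_iden2}) to factor $(bq^{k+n}x;q)_{\infty}$ and the identification of the inner $n$-sum with the ${}_{1}\Phi_{1}$ series. The bookkeeping you flag as a potential pitfall checks out: $(uq)^{\binom{n}{2}}(-1)^{n}$ matches the ${}_{1}\Phi_{1}$ weight $u^{\binom{n}{2}}[(-1)^{n}q^{\binom{n}{2}}]$ with argument $z=u^{k}by$.
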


\begin{corollary}\label{coro3}
If $v=q$, $w=1$ and setting $a\mapsto-a$, then
\begin{equation}\label{eqn_coro2b}
    \opE(yD_{q}|u)\left\{\frac{(ax;q)_{\infty}}{(bx;q)_{\infty}}\right\}\\
        =\frac{(ax;q)_{\infty}}{(bx;q)_{\infty}}\sum_{k=0}^{\infty}\frac{(uq)^{\binom{k}{2}}(-ay)^k}{(q;q)_{k}(ax;q)_{k}}\ope_{q}(u^{k}by,u).
\end{equation}
\end{corollary}

\begin{theorem}\label{theo_oper_ax/bx}
    \begin{equation}
        \opE(yD_{q}|u)\left\{\frac{(ax;q)_{\infty}}{(bx;q)_{\infty}}\right\}=\frac{(ax;q)_{\infty}}{(bx;q)_{\infty}}{}_{2}\Phi_{1}\left(
        \begin{array}{c}
             a/b,0 \\
             ax
        \end{array};
        q,u,by
        \right).
    \end{equation}
\end{theorem}
\begin{proof}
    \begin{align*}
        \opE(yD_{q}|u)\left\{\frac{(ax;q)_{\infty}}{(bx;q)_{\infty}}\right\}        &=\sum_{n=0}^{\infty}u^{\binom{n}{2}}\frac{y^n}{(q;q)_{n}}D_{q}^n\left\{\frac{(ax;q)_{\infty}}{(bx;q)_{\infty}}\right\}\\
        &=\sum_{n=0}^{\infty}u^{\binom{n}{2}}\frac{y^n}{(q;q)_{n}}D_{q}^n\left\{{}_{1}\phi_{0}\left(
    \begin{array}{c}
         a/b\\
         -
    \end{array}
    ;q,bx
    \right)\right\}\\
    &=\sum_{n=0}^{\infty}u^{\binom{n}{2}}\frac{(by)^n}{(q;q)_{n}}(a/b;q)_{n}\cdot{}_{1}\phi_{0}\left(
    \begin{array}{c}
         aq^n/b\\
         -
    \end{array}
    ;q,bx
    \right)\\
    &=\sum_{n=0}^{\infty}u^{\binom{n}{2}}\frac{(a/b;q)_{n}}{(q;q)_{n}}(by)^{n}\frac{(aq^nx;q)_{\infty}}{(bx;q)_{\infty}}\\
    &=\frac{(ax;q)_{\infty}}{(bx;q)_{\infty}}{}_{2}\Phi_{1}\left(
    \begin{array}{c}
         a/b,0\\
         ax
    \end{array}
    ;q,u, by
    \right).
    \end{align*}
\end{proof}

\begin{corollary}\label{coro4}
If $v=w=q$ and setting $a\mapsto-a$ and $b\mapsto-b$, then
    \begin{multline}
        \opE(yD_{q}|u)\{(ax,bx;q)_{\infty}\}\\
        =(ax,bx;q)_{\infty}\sum_{k=0}^{\infty}\frac{(qu)^{\binom{k}{2}}(-ay)^k}{(ax;q)_{k}(bx;q)_{k}(q;q)_{k}}{}_{1}\Phi_{1}\left(
        \begin{array}{c}
             0 \\
             bxq^k
        \end{array};
        q,u,u^kby
        \right).
    \end{multline}
\end{corollary}
\section{Generating functions for \texorpdfstring{$\opR_{n}(x,y;u|q)$}{r\_n(x,y;u|q)}}

\subsection{Generalized q-binomial theorem}

\begin{theorem}\label{theo_v_gen_func}
    \begin{equation}
        \sum_{n=0}^{\infty}v^{\binom{n}{2}}\opR_{n}(x,y;u|q)\frac{z^n}{(q;q)_{n}}=\sum_{k=0}^{\infty}\left(uv\right)^{\binom{k}{2}}\frac{(yz)^k}{(q;q)_{k}}\ope_{q}\left(v^{k}xz,v\right).
    \end{equation}
\end{theorem}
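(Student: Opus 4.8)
The plan is to recognize the left-hand side as the image of a single deformed $q$-exponential function under the operator $\T(yD_{q}|u)$, and then to invoke Theorem~\ref{theo_opeE_exp}. First I would use the representation $\rr_{n}(x,y;u|q)=\T(yD_{q}|u)\{x^{n}\}$ from Proposition~\ref{prop_translation} to rewrite the summand, turning the left-hand side into
\[
\sum_{n=0}^{\infty}v^{\binom{n}{2}}\frac{z^{n}}{(q;q)_{n}}\,\T(yD_{q}|u)\{x^{n}\}.
\]
Since $\T(yD_{q}|u)$ is a linear operator acting only on the variable $x$, I would pull it outside the $n$-sum, so that it acts on
\[
\sum_{n=0}^{\infty}v^{\binom{n}{2}}\frac{(xz)^{n}}{(q;q)_{n}}=\e_{q}(xz,v),
\]
the last equality being the definition of the deformed $q$-exponential function. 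This reduces the left-hand side to $\T(yD_{q}|u)\{\e_{q}(xz,v)\}$.

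Then I would apply Theorem~\ref{theo_opeE_exp} with $a=z$, which gives
\[
\T(yD_{q}|u)\{\e_{q}(zx,v)\}=\sum_{k=0}^{\infty}(uv)^{\binom{k}{2}}\frac{(zy)^{k}}{(q;q)_{k}}\e_{q}(zv^{k}x,v);
\]
after the harmless rewrites $zy=yz$ and $zv^{k}x=v^{k}xz$ this is precisely the right-hand side of the claimed identity. Thus the theorem drops out in a few lines once the interchange of operator and sum is granted.

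The one delicate point — the step I expect to be the main obstacle — is justifying the interchange of the infinite $n$-sum with $\T(yD_{q}|u)$, which is itself an infinite series in $D_{q}$. At the level of formal power series in $x,y,z$ this is automatic, since each fixed monomial $x^{n-k}y^{k}z^{n}$ receives contributions from only finitely many terms; for an analytic statement one would restrict to a region where all series converge absolutely (small $|xz|,|yz|$, in the spirit of the convergence hypotheses attached to the Mehler and Rogers formulas quoted in the introduction) and appeal to Fubini. I note also that Theorem~\ref{theo_opeE_exp} was stated under $u\neq0$, so strictly this route proves the identity for $u\neq0$.

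As an independent check that bypasses the operator calculus and also covers $u=0$, I would verify the identity by direct expansion: substituting $\rr_{n}(x,y;u|q)=\sum_{k=0}^{n}\qbinom{n}{k}_{q}u^{\binom{k}{2}}x^{n-k}y^{k}$ together with $\qbinom{n}{k}_{q}=(q;q)_{n}/[(q;q)_{k}(q;q)_{n-k}]$, swapping the order of summation, and setting $m=n-k$. The identity $\binom{m+k}{2}=\binom{m}{2}+\binom{k}{2}+mk$ recorded in the introduction then factors $v^{\binom{n}{2}}=v^{\binom{m}{2}}v^{\binom{k}{2}}v^{mk}$, so the double sum splits as $\sum_{k}(uv)^{\binom{k}{2}}\frac{(yz)^{k}}{(q;q)_{k}}\sum_{m}v^{\binom{m}{2}}\frac{(v^{k}xz)^{m}}{(q;q)_{m}}$, and the inner sum is $\e_{q}(v^{k}xz,v)$ by definition, reproducing the right-hand side.
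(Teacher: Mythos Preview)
Your proposal is correct and follows exactly the same route as the paper's proof: represent $\rr_{n}$ via $\T(yD_{q}|u)\{x^{n}\}$, pull the operator through the sum to obtain $\T(yD_{q}|u)\{\e_{q}(xz,v)\}$, and finish with Theorem~\ref{theo_opeE_exp}. The paper does not address the interchange of sum and operator nor the $u=0$ case, so your remarks on formal-series justification and the direct-expansion check go beyond what the paper provides.
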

\begin{proof}
From Theorem \ref{theo_opeE_exp}
    \begin{align*}
        \sum_{n=0}^{\infty}v^{\binom{n}{2}}\opR_{n}(x,y;u|q)\frac{z^n}{(q;q)_{n}}&=\sum_{n=0}^{\infty}v^{\binom{n}{2}}\opE(yD_{q}|u)\left\{\frac{(xz)^n}{(q;q)_{n}}\right\}\\
        &=\opE(yD_{q}|u)\left\{\sum_{n=0}^{\infty}v^{\binom{n}{2}}\frac{(xz)^n}{(q;q)_{n}}\right\}\\
        &=\opE(yD_{q}|u)\left\{\ope_{q}(xz,v)\right\}\\
        &=\sum_{k=0}^{\infty}\left(uv\right)^{\binom{k}{2}}\frac{(yz)^k}{(q;q)_{k}}\ope_{q}\left(v^{k}xz,v\right).
    \end{align*}
\end{proof}

\begin{corollary}\label{coro5}
Set $\vert yz\vert<1$ and $\vert v\vert<1$. Then
    \begin{equation}
        \sum_{n=0}^{\infty}v^{\binom{n}{2}}\opR_{n}(x,y;v^{-1}|q)\frac{z^n}{(q;q)_{n}}=\sum_{n=0}^{\infty}v^{\binom{n}{2}}\frac{(xz)^{n}}{(q;q)_{n}}\frac{1}{(v^nyz;q)_{\infty}}.
    \end{equation}
\end{corollary}
\begin{proof}
    \begin{align*}
        \sum_{n=0}^{\infty}v^{\binom{n}{2}}\opR_{n}(x,y;v^{-1}|q)\frac{z^n}{(q;q)_{n}}&=\sum_{k=0}^{\infty}\frac{(yz)^k}{(q;q)_{k}}\ope_{q}\left(v^{k}xz,v\right)\\
        &=\sum_{k=0}^{\infty}\frac{(yz)^k}{(q;q)_{k}}\sum_{n=0}^{\infty}v^{\binom{n}{2}}\frac{(v^{k}xz)^n}{(q;q)_{n}}\\
        &=\sum_{n=0}^{\infty}v^{\binom{n}{2}}\frac{(xz)^{n}}{(q;q)_{n}}\sum_{k=0}^{\infty}\frac{(v^nyz)^k}{(q;q)_{k}}\\
        &=\sum_{n=0}^{\infty}v^{\binom{n}{2}}\frac{(xz)^{n}}{(q;q)_{n}}\frac{1}{(v^nyz;q)_{\infty}}.
    \end{align*}
\end{proof}
From Corollary \ref{coro5} we have the following results:
\begin{align}\label{eqn_qRq-}
    \sum_{n=0}^{\infty}q^{\binom{n}{2}}\opR_{n}(x,y;q^{-1}|q)\frac{z^n}{(q;q)_{n}}&=\frac{1}{(yz;q)_{\infty}}{}_{1}\phi_{1}\left(
        \begin{array}{c}
             yz \\
             0
        \end{array};
        q,-xz
        \right).
\end{align}

\begin{corollary}
Set $v=q^2$, $u=q^{-2}$, and $z\mapsto qz$ in Theorem \ref{theo_v_gen_func}. Then 
\begin{multline}
    \sum_{n=0}^{\infty}q^{n^2}\opR_{n}(x,y;q^{-2}|q)\frac{z^n}{(q;q)_{n}}\\
    =\frac{1}{(yz;q)_{\infty}}{}_{4}\phi_{5}\left(
        \begin{array}{c}
             \sqrt{yz},-\sqrt{yz},\sqrt{yzq},-\sqrt{yzq} \\
             0,0,0,0,0
        \end{array};
        q,q^2xz
        \right).
\end{multline}
\end{corollary}
\begin{proof}
Set $v=q^2$, $u=q^{-2}$, and by mapping $z\mapsto qz$, then
    \begin{align*}
        &\sum_{n=0}^{\infty}q^{n^2}\opR_{n}(x,y;q^{-2}|q)\frac{z^n}{(q;q)_{n}}\\
        &=\sum_{m=0}^{\infty}q^{m^2}\frac{(xz)^m}{(q;q)_{m}}
        \frac{1}{(q^{2m}yz;q)_{\infty}}\\
        &=\frac{1}{(yz;q)_{\infty}}\sum_{m=0}^{\infty}q^{m^2}\frac{(xz)^m(yz;q)_{2m}}{(q;q)_{m}}\\
        &=\frac{1}{(yz;q)_{\infty}}\sum_{m=0}^{\infty}q^{m^2}\frac{(xz)^m(yz;q^2)_{m}(yzq;q^2)_{m}}{(q;q)_{m}}\\
        &=\frac{1}{(yz;q)_{\infty}}\sum_{m=0}^{\infty}q^{m^2}\frac{(xz)^m(\sqrt{yz};q)_{m}(-\sqrt{yz};q)_{m}(\sqrt{yzq};q)_{m}(-\sqrt{yzq};q)_{m}}{(q;q)_{m}}\\
        &=\frac{1}{(yz;q)_{\infty}}{}_{4}\phi_{5}\left(
        \begin{array}{c}
             \sqrt{yz},-\sqrt{yz},\sqrt{yzq},-\sqrt{yzq} \\
             0,0,0,0,0
        \end{array};
        q,qxz
        \right).
    \end{align*}
\end{proof}

\begin{corollary}
Set $v=\sqrt{q}$ and $u=\sqrt{q^{-1}}$ in Theorem \ref{theo_v_gen_func}. Then
\begin{align}
    &\sum_{n=0}^{\infty}\sqrt{q}^{\binom{n}{2}}\opR_{n}(x,y;\sqrt{q^{-1}}|q)\frac{z^n}{(q;q)_{n}}\nonumber\\
    &\hspace{1cm}=\frac{1}{(yz;q)_{\infty}}{}_{3}\phi_{4}\left(
        \begin{array}{c}
            0,0,0\\
             \sqrt{q},-\sqrt{q},-q,yz
        \end{array};
        q,\sqrt{q}x^2z^2
        \right)\nonumber\\
        &\hspace{2cm}+\frac{xz}{(1-q)(\sqrt{q}yz;q)_{\infty}}{}_{3}\phi_{4}\left(
        \begin{array}{c}
            0,0,0\\
             q\sqrt{q},-q\sqrt{q},-q,\sqrt{q}yz
        \end{array};
        q,q\sqrt{q}x^2z^2
        \right).
\end{align}
\end{corollary}
\begin{proof}
Set $v=\sqrt{q}$, $u=\sqrt{q^{-1}}$. Then
    \begin{align*}
        &\sum_{n=0}^{\infty}\sqrt{q}^{\binom{n}{2}}\opR_{n}(x,y;\sqrt{q^{-1}}|q)\frac{z^n}{(q;q)_{n}}\\
        &=\sum_{m=0}^{\infty}\sqrt{q}^{\binom{m}{2}}\frac{(xz)^m}{(q;q)_{m}}\frac{1}{(q^{m/2}yz;q)_{\infty}}\\
        &=\sum_{m=0}^{\infty}\sqrt{q}^{\binom{2m}{2}}\frac{(xz)^{2m}}{(q;q)_{2m}}\frac{1}{(q^{m}yz;q)_{\infty}}+\sum_{m=0}^{\infty}\sqrt{q}^{\binom{2m+1}{2}}\frac{(xz)^{2m+1}}{(q;q)_{2m+1}}\frac{1}{(q^{m+1/2}yz;q)_{\infty}}\\
        &=\frac{1}{(yz;q)_{\infty}}\sum_{m=0}^{\infty}\sqrt{q}^{\binom{2m}{2}}\frac{(xz)^{2m}}{(q;q)_{2m}}(yz;q)_{m}\\
        &\hspace{3cm}+\frac{1}{(\sqrt{q}yz;q)_{\infty}}\sum_{m=0}^{\infty}\sqrt{q}^{\binom{2m+1}{2}}\frac{(xz)^{2m+1}}{(q;q)_{2m+1}}(\sqrt{q}yz;q)_{m}.
    \end{align*}
Now by using the identities Eq.~\eqref{eqn_iden3} and Eq.~\eqref{eqn_iden4},
\begin{align*}
    &\sum_{n=0}^{\infty}\sqrt{q}^{\binom{n}{2}}\opR_{n}(x,y;\sqrt{q^{-1}}|q)\frac{z^n}{(q;q)_{n}}\\
    &=\frac{1}{(yz;q)_{\infty}}\sum_{m=0}^{\infty}\sqrt{q}^{\binom{2m}{2}}\frac{(yz;q)_{m}(xz)^{2m}}{(\sqrt{q};q)_{m}(-\sqrt{q};q)_{m}(-q;q)_{m}(q;q)_{m}}\\
        &\hspace{1cm}+\frac{1}{(1-q)(\sqrt{q}yz;q)_{\infty}}\sum_{m=0}^{\infty}\sqrt{q}^{\binom{2m+1}{2}}\frac{(\sqrt{q}yz;q)_{m}(xz)^{2m+1}}{(-q;q)_{m}(q;q)_{m}(-q\sqrt{q};q)_{m}(q\sqrt{q};q)_{m}}\\
        &=\frac{1}{(yz;q)_{\infty}}{}_{2}\phi_{3}\left(
        \begin{array}{c}
            yz,0\\
             \sqrt{q},-\sqrt{q},-q
        \end{array};
        q,\sqrt{q}x^2z^2
        \right)\\
        &\hspace{3cm}+\frac{xz}{(1-q)(\sqrt{q}yz;q)_{\infty}}{}_{2}\phi_{3}\left(
        \begin{array}{c}
            \sqrt{q}yz,0\\
             q\sqrt{q},-q\sqrt{q},-q
        \end{array};
        q,q\sqrt{q}x^2z^2
        \right).
\end{align*}
\end{proof}

\begin{corollary}[Generalized $q$-binomial theorem]\label{coro6}
If $v=1$ in Theorem \ref{theo_v_gen_func}, then
\begin{equation}\label{eqn_cauchy_gen}
    \sum_{n=0}^{\infty}\opR_{n}(x,y;u|q)\frac{t^n}{(q;q)_{n}}=\frac{\ope_{q}(yt,u)}{(xt;q)_{\infty}}.
\end{equation}    
\end{corollary}
Eq.~\eqref{eqn_cauchy_gen} is a generalization of the $q$-binomial theorem, and Eq.~\eqref{eqn_qbin_the} and Theorem \ref{theo_v_gen_func} provide a very straightforward proof for this. 
On the other hand,
\begin{itemize}
    \item 
    If $x=1$, $y=x$, and $u=1$ in Eq.~\eqref{eqn_cauchy_gen}, then 
\begin{equation*}
    \sum_{n=0}^{\infty}\oph_{n}(x|q)\frac{t^n}{(q;q)_{n}}=\frac{1}{(t,x t;q)_{\infty}},\ \max\{\vert t\vert,\vert xt\vert\}<1.
\end{equation*}
\item 
If $y\mapsto-y$ and $u=q$ in Eq.~\eqref{eqn_cauchy_gen}, then
\begin{equation*}
    \sum_{n=0}^{\infty}\opP_{n}(x,y)\frac{t^n}{(q;q)_{n}}=\frac{(yt;q)_{\infty}}{(x t;q)_{\infty}},\ \vert xt\vert<1.
\end{equation*}
\item 
If $u=q^2$, $y\mapsto qy$, and $y\neq0$ in Eq.~\eqref{eqn_cauchy_gen}, and if $yz=1$, then
\begin{equation}
    \sum_{n=0}^{\infty}\opS_{n}(x,y^{-1};q)\frac{y^n}{(q;q)_{n}}=\frac{\mathcal{R}_{q}(1)}{(xy^{-1};q)_{\infty}}=\frac{1}{(xy^{-1};q)_{\infty}(q;q^5)_{\infty}(q^4;q^5)_{\infty}}.
\end{equation}
If $yz=q$, then
\begin{equation}
    \sum_{n=0}^{\infty}\opS_{n}(x,qy^{-1};q)\frac{y^n}{(q;q)_{n}}=\frac{\mathcal{R}_{q}(q)}{(xy^{-1};q)_{\infty}}=\frac{1}{(xy^{-1};q)_{\infty}(q^2;q^5)_{\infty}(q^3;q^5)_{\infty}}.
\end{equation}
$\mathcal{R}_{q}(1)$ and $\mathcal{R}_{q}(q)$ are the Rogers-Ramanujan functions \cite{ramanujan,rogers}
\begin{equation}
    \mathcal{R}_{q}(1)=\frac{1}{(q;q^5)_{\infty}(q^4;q^5)_{\infty}
    }\text{  and  }\mathcal{R}_{q}(q)=\frac{1}{(q^2;q^5)_{\infty}(q^3;q^5)_{\infty}
    }.
\end{equation}
\item If $u=\sqrt{q}$ in Eq.~\eqref{eqn_cauchy_gen}. Then
\begin{equation}
    \sum_{n=0}^{\infty}\opE_{n}(x,y;q)\frac{t^n}{(q;q)_{n}}=\frac{\mathcal{E}_{q}(yt)}{(xt;q)_{\infty}}.
\end{equation}
\end{itemize}

\begin{corollary}\label{coro7}
If $v=q$ in Theorem \ref{theo_v_gen_func}, then
\begin{equation}
    \sum_{n=0}^{\infty}(-1)^nq^{\binom{n}{2}}\opR_{n}(x,y;u|q)\frac{z^n}{(q;q)_{n}}=(xz;q)_{\infty}{}_{1}\Phi_{1}\left(
        \begin{array}{c}
             0 \\
             xz
        \end{array};
        q,u,yz
        \right).
\end{equation}
\end{corollary}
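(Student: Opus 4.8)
The plan is to specialize Theorem~\ref{theo_v_gen_func} at $v=q$ and then rename $z\mapsto -z$ to put the result in the displayed form. First I would set $v=q$ in Theorem~\ref{theo_v_gen_func}: the left-hand side becomes $\sum_{n\ge0}q^{\binom{n}{2}}\rr_{n}(x,y;u|q)\,z^{n}/(q;q)_{n}$, and the right-hand side becomes $\sum_{k\ge0}(uq)^{\binom{k}{2}}\,(yz)^{k}\,\e_{q}(q^{k}xz,q)/(q;q)_{k}$. The one external fact I need is the product form of the deformed exponential at $u=q$, namely $\e_{q}(q^{k}xz,q)=(-q^{k}xz;q)_{\infty}$, which follows immediately from the identity $\e_{q}(-w,q)=(w;q)_{\infty}$ recorded in the introduction (take $w=-q^{k}xz$).

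Next I would free the $k$-dependence inside the product. Using identity~(\ref{eqn_iden2}) in its limiting form $(-xz;q)_{\infty}=(-xz;q)_{k}\,(-q^{k}xz;q)_{\infty}$ gives $\e_{q}(q^{k}xz,q)=(-xz;q)_{\infty}/(-xz;q)_{k}$. Pulling the $k$-independent factor $(-xz;q)_{\infty}$ out of the sum reduces the right-hand side to $(-xz;q)_{\infty}\sum_{k\ge0}(uq)^{\binom{k}{2}}(yz)^{k}/[(q;q)_{k}(-xz;q)_{k}]$.

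It then remains to recognize the residual series as a deformed basic hypergeometric series. Writing $(uq)^{\binom{k}{2}}=u^{\binom{k}{2}}q^{\binom{k}{2}}$ and comparing with the definition of ${}_{1}\Phi_{1}$, where $1+s-r=1$ contributes exactly one factor $(-1)^{k}q^{\binom{k}{2}}$ and the upper parameter $0$ gives $(0;q)_{k}=1$, the sum equals ${}_{1}\Phi_{1}$ with upper parameter $0$, lower parameter $-xz$, base $q$, deformation $u$, and argument $-yz$; note that the lone $(-1)^{k}$ forces the argument to be $-yz$ rather than $yz$. Finally, substituting $z\mapsto -z$ throughout turns $q^{\binom{n}{2}}z^{n}$ into $(-1)^{n}q^{\binom{n}{2}}z^{n}$ on the left and sends $(-xz;q)_{\infty}$, $-xz$, $-yz$ to $(xz;q)_{\infty}$, $xz$, $yz$ on the right, producing exactly the asserted identity.

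I expect no genuine obstacle; the whole argument is a specialization followed by a product split and a pattern match. The only place demanding care is the sign bookkeeping in the last two steps: one must correctly absorb the $q^{\binom{k}{2}}$ coming from $(uq)^{\binom{k}{2}}$ together with the $(-1)^{k}$ demanded by the $1+s-r$ exponent when identifying the ${}_{1}\Phi_{1}$, and then track how the closing substitution $z\mapsto -z$ simultaneously generates the $(-1)^{n}$ on the left and clears all the minus signs on the right.
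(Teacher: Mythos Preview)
Your proposal is correct and follows essentially the same route as the paper's proof: specialize $v=q$ in Theorem~\ref{theo_v_gen_func}, use $\e_{q}(\cdot,q)=(-\,\cdot\,;q)_{\infty}$ together with the factorization of the infinite product via Eq.~(\ref{eqn_iden2}), and then identify the remaining series as a ${}_{1}\Phi_{1}$. The only cosmetic difference is that the paper performs the substitution $z\mapsto -z$ at the outset rather than at the end, so the signs are cleared immediately instead of in a final step.
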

\begin{proof}
\begin{align*}
    \sum_{n=0}^{\infty}(-1)^nq^{\binom{n}{2}}\opR_{n}(x,y;u|q)\frac{z^n}{(q;q)_{n}}&=\sum_{k=0}^{\infty}\left(uq\right)^{\binom{k}{2}}\frac{(-yz)^k}{(q;q)_{k}}\ope_{q}\left(-q^{k}xz,q\right)\\
    &=(xz;q)_{\infty}\sum_{k=0}^{\infty}\left(uq\right)^{\binom{k}{2}}\frac{(-yz)^k}{(xz;q)_{k}(q;q)_{k}}\\
    &=(xz;q)_{\infty}\cdot{}_{1}\Phi_{1}\left(
        \begin{array}{c}
             0 \\
             xz
        \end{array};
        q,u,yz
        \right).
\end{align*}    
\end{proof}
Some specializations of Corollary \ref{coro7} are
\begin{itemize}
    \item If $u=1$, then
\begin{equation}
    \sum_{n=0}^{\infty}(-1)^nq^{\binom{n}{2}}\opR_{n}(x,y)\frac{z^n}{(q;q)_{n}}=(xz;q)_{\infty}\cdot{}_{1}\phi_{1}\left(
        \begin{array}{c}
             0 \\
             xz
        \end{array};
        q,yz
        \right).
\end{equation}
\item If $u=q$, then
\begin{equation}
    \sum_{n=0}^{\infty}(-1)^nq^{\binom{n}{2}}\opP_{n}(x,y)\frac{z^n}{(q;q)_{n}}=(xz;q)_{\infty}\cdot{}_{0}\phi_{1}\left(
        \begin{array}{c}
             - \\
             xz
        \end{array};
        q,-yz
        \right).
\end{equation}
\item If $u=q^2$, then
\begin{equation}
    \sum_{n=0}^{\infty}(-1)^nq^{\binom{n}{2}}\opS_{n}(x,y;q)\frac{z^n}{(q;q)_{n}}=(xz;q)_{\infty}\cdot{}_{0}\phi_{2}\left(
        \begin{array}{c}
             0 \\
             xz,0
        \end{array};
        q,yz
        \right).
\end{equation}
\item If $u=\sqrt{q}$, then
\begin{equation}
    \sum_{n=0}^{\infty}(-1)^nq^{\binom{n}{2}}\opE_{n}(x,y;q)\frac{z^n}{(q;q)_{n}}=(xz;q)_{\infty}\cdot{}_{1}\phi_{3}\left(
        \begin{array}{c}
             0 \\
             \sqrt{xz},-\sqrt{xz},-\sqrt{q}
        \end{array};
        \sqrt{q},yz
        \right).
\end{equation}
\end{itemize}

\subsection{Generalization of Heine's transformation formula}

Heine's transformation formula is
\begin{equation}\label{eqn_heine}
    {}_{2}\phi_{1}\left(
        \begin{array}{c}
             a,b \\
             c
        \end{array};
        q,z
        \right)=\frac{(b,az;q)_{\infty}}{(c,z;q)_{\infty}}{}_{2}\phi_{1}\left(
        \begin{array}{c}
             c/b,z \\
             az
        \end{array};
        q,b
        \right)
\end{equation}
where $\vert z\vert<1$ and $\vert b\vert<1$. In this section, we investigate a generalization of Heine's transformation formula via the generating function
\begin{equation*}
    \sum_{n=0}^{\infty}\opR_{n}(x,y;u|q)\frac{(z;q)_{n}}{(az;q)_{n}(q;q)_{n}}t^n.
\end{equation*}

\begin{definition}
We define the following representation for the deformed homogeneous polynomials $\opR_{n}(x,y;u|q)$ based on the basic hypergeometric series
    \begin{multline}
        {}_{r}\rbhs_{s}\left(x,y;u;
        \begin{array}{c}
             a_{1},\ldots,a_{r} \\
             b_{1},\ldots,b_{s}
        \end{array};
        q,t
        \right)\\
        =\sum_{n=0}^{\infty}\opR_{n}(x,y;u|q)\frac{(a_{1},\ldots,a_{r};q)_{n}}{(q;q)_{n}(b_{1},\ldots,b_{s};q)_{n}}\Big[(-1)^{n}q^{\binom{n}{2}}\Big]^{1+s-r}t^n.
    \end{multline}
For $u=1,q,q^2,\sqrt{q}$, we have, respectively:
\begin{multline}
        {}_{r}\opR_{s}\left(x,y;
        \begin{array}{c}
             a_{1},\ldots,a_{r} \\
             b_{1},\ldots,b_{s}
        \end{array};
        q,t
        \right)\\
        =\sum_{n=0}^{\infty}\opr_{n}(x,y)\frac{(a_{1},\ldots,a_{r};q)_{n}}{(q;q)_{n}(b_{1},\ldots,b_{s};q)_{n}}\Big[(-1)^{n}q^{\binom{n}{2}}\Big]^{1+s-r}t^n.
    \end{multline}
    \begin{multline}
        {}_{r}\opP_{s}\left(x,y;
        \begin{array}{c}
             a_{1},\ldots,a_{r} \\
             b_{1},\ldots,b_{s}
        \end{array};
        q,t
        \right)\\
        =\sum_{n=0}^{\infty}\opP_{n}(x,y)\frac{(a_{1},\ldots,a_{r};q)_{n}}{(q;q)_{n}(b_{1},\ldots,b_{s};q)_{n}}\Big[(-1)^{n}q^{\binom{n}{2}}\Big]^{1+s-r}t^n.
    \end{multline}
    \begin{multline}
        {}_{r}\opS_{s}\left(x,y;
        \begin{array}{c}
             a_{1},\ldots,a_{r} \\
             b_{1},\ldots,b_{s}
        \end{array};
        q,t
        \right)\\
        =\sum_{n=0}^{\infty}\opS_{n}(x,y;q)\frac{(a_{1},\ldots,a_{r};q)_{n}}{(q;q)_{n}(b_{1},\ldots,b_{s};q)_{n}}\Big[(-1)^{n}q^{\binom{n}{2}}\Big]^{1+s-r}t^n.
    \end{multline}
    \begin{multline}
        {}_{r}\opE_{s}\left(x,y;
        \begin{array}{c}
             a_{1},\ldots,a_{r} \\
             b_{1},\ldots,b_{s}
        \end{array};
        q,t
        \right)\\
        =\sum_{n=0}^{\infty}\opE_{n}(x,y;q)\frac{(a_{1},\ldots,a_{r};q)_{n}}{(q;q)_{n}(b_{1},\ldots,b_{s};q)_{n}}\Big[(-1)^{n}q^{\binom{n}{2}}\Big]^{1+s-r}t^n.
    \end{multline}
\end{definition}
From Eqs.~\eqref{eqn_qRq-},
\begin{align*}
    {}_{0}\rbhs_{0}\left(x,y;q^{-1};
        \begin{array}{c}
             -\\
             -
        \end{array};
        q,-z
        \right)&=\frac{1}{(yz;q)_{\infty}}{}_{1}\phi_{1}\left(
        \begin{array}{c}
             yz \\
             0
        \end{array};
        q,-xz
        \right).\\
        {}_{1}\rbhs_{0}\left(x,y;u;
        \begin{array}{c}
             0\\
             -
        \end{array};
        q,z
        \right)&=\frac{\ope_{q}(yt,u)}{(xt;q)_{\infty}}.\\
        {}_{0}\rbhs_{0}\left(x,y;u;
        \begin{array}{c}
             -\\
             -
        \end{array};
        q,z
        \right)&=(xz;q)_{\infty}{}_{1}\Phi_{1}\left(
        \begin{array}{c}
             0 \\
             xz
        \end{array};
        q,u,yz
        \right).
\end{align*}

\begin{theorem}\label{theo_repre_DHP}
We get the following representation for the deformed homogeneous polynomials $\opR_{n}(x,y;u|q)$
    \begin{equation}
        {}_{1}\rbhs_{1}\left(x,y;u;
        \begin{array}{c}
             z \\
             az
        \end{array};
        q,b
        \right)=\frac{(z;q)_{\infty}}{(xb,az;q)_{\infty}}\sum_{n=0}^{\infty}\ope_{q}(ybq^n,u)\frac{(a;q)_{n}(xb;q)_{n}}{(q;q)_{n}}z^n,
    \end{equation}
where $\vert b\vert<1$ and $\vert z\vert<1$.
\end{theorem}
\begin{proof}
From generalized $q$-binomial theorem
\begin{equation}
    \frac{\ope_{q}(ybq^n,u)}{(xbq^n;q)_{\infty}}=\sum_{m=0}^{\infty}\opR_{m}(x,y;u|q)\frac{(bq^{n})^m}{(q;q)_{m}}.
\end{equation}
Hence,
\begin{align*}
    \sum_{n=0}^{\infty}\ope_{q}(ybq^n,u)\frac{(a;q)_{n}(xb;q)_{n}}{(q;q)_{n}}z^n&=(xb;q)_{\infty}\sum_{n=0}^{\infty}\frac{(a;q)_{n}}{(q;q)_{n}}\frac{\ope_{q}(ybq^n,u)}{(xbq^n;q)_{\infty}}z^n\\
    &=(xb;q)_{\infty}\sum_{n=0}^{\infty}\frac{(a;q)_{n}}{(q;q)_{n}}\sum_{m=0}^{\infty}\opR_{m}(x,y;u|q)\frac{(bq^{n})^m}{(q;q)_{m}}z^n\\
    &=(xb;q)_{\infty}\sum_{m=0}^{\infty}\opR_{m}(x,y;u|q)\frac{b^m}{(q;q)_{m}}\sum_{n=0}^{\infty}\frac{(a;q)_{n}}{(q;q)_{n}}(zq^m)^n\\
    &=(xb;q)_{\infty}\sum_{m=0}^{\infty}\opR_{m}(x,y;u|q)\frac{b^m}{(q;q)_{m}}\frac{(azq^m;q)_{\infty}}{(zq^m;q)_{\infty}}\\
    &=\frac{(az,xb;q)_{\infty}}{(z;q)_{\infty}}\sum_{m=0}^{\infty}\opR_{m}(x,y;u|q)\frac{(z;q)_{m}b^m}{(az;q)_{m}(q;q)_{m}}\\
\end{align*}
\end{proof}

\begin{corollary}
From  Theorem \ref{theo_repre_DHP} we have the following representation for the generalized Rogers-Szeg\"o polynomials
    \begin{equation}
        {}_{2}\opR_{1}\left(x,y;
        \begin{array}{c}
             z,0 \\
             az
        \end{array};
        q,b
        \right)=\frac{(z;q)_{\infty}}{(az,bx,by;q)_{\infty}}{}_{3}\phi_{2}\left(
        \begin{array}{c}
             a,bx,by \\
             0,0
        \end{array};
        q, z
        \right).
    \end{equation}
\end{corollary}
\begin{proof}
Set $u=1$ in Theorem \ref{theo_repre_DHP}. Then
    \begin{align*}
        \sum_{n=0}^{\infty}\opr_{n}(x,y)\frac{(z;q)_{n}}{(az;q)_{n}(q;q)_{n}}b^n&=\frac{(z;q)_{\infty}}{(bx,by,az;q)_{\infty}}\sum_{n=0}^{\infty}\frac{(a;q)_{n}(bx;q)_{n}(by;q)_{n}}{(q;q)_{n}}z^n\\
        &=\frac{(z;q)_{\infty}}{(az,bx,by;q)_{\infty}}{}_{3}\phi_{2}\left(
        \begin{array}{c}
             a,bx,by \\
             0,0
        \end{array};
        q, z
        \right).
    \end{align*}
\end{proof}

\begin{corollary}\label{coro11}
From  Theorem \ref{theo_repre_DHP}, we have the following representation for the Cauchy polynomials
    \begin{equation}
        {}_{2}\opP_{1}\left(x,y;
        \begin{array}{c}
             z,0 \\
             az
        \end{array};
        q,b
        \right)=\frac{(z,by;q)_{\infty}}{(az,bx;q)_{\infty}}{}_{2}\phi_{1}\left(
        \begin{array}{c}
             a,bx\\
             by
        \end{array};
        q, z
        \right).
    \end{equation}
\end{corollary}
\begin{proof}
Set $u=q$ and $y\mapsto-y$ in Theorem \ref{theo_repre_DHP}. Then
    \begin{align*}
        \sum_{n=0}^{\infty}\opP_{n}(x,y)\frac{(z;q)_{n}}{(az;q)_{n}(q;q)_{n}}b^n&=\frac{(z;q)_{\infty}}{(bx,az;q)_{\infty}}\sum_{n=0}^{\infty}(byq^n;q)_{\infty}\frac{(a;q)_{n}(bx;q)_{n}}{(q;q)_{n}}z^n\\
        &=\frac{(z,by;q)_{\infty}}{(bx,az;q)_{\infty}}\sum_{n=0}^{\infty}\frac{(a;q)_{n}(bx;q)_{n}}{(by;q)_{n}(q;q)_{n}}z^n\\
        &=\frac{(z,by;q)_{\infty}}{(bx,az;q)_{\infty}}{}_{2}\phi_{1}\left(
        \begin{array}{c}
             a,bx\\
             by
        \end{array};
        q, z
        \right).
    \end{align*}
\end{proof}

\begin{corollary}
From  Theorem \ref{theo_repre_DHP} we have the following representation for the homogeneous Stieltjes-Wigert polynomials
    \begin{equation}
        {}_{2}\opS_{1}\left(x,y;
        \begin{array}{c}
             z,0 \\
             az
        \end{array};
        q,b
        \right)=\frac{(z;q)_{\infty}}{(xb,az;q)_{\infty}}\sum_{n=0}^{\infty}\RR_{q}(ybq^n)\frac{(a;q)_{n}(xb;q)_{n}}{(q;q)_{n}}z^n.
    \end{equation}
\end{corollary}
\begin{corollary}
From  Theorem \ref{theo_repre_DHP}, we have the following representation for the Exton polynomials
    \begin{equation}
        {}_{2}\opE_{1}\left(x,y;
        \begin{array}{c}
             z,0 \\
             az
        \end{array};
        q,b
        \right)=\frac{(z;q)_{\infty}}{(xb,az;q)_{\infty}}\sum_{n=0}^{\infty}\EE_{q}(ybq^n)\frac{(a;q)_{n}(xb;q)_{n}}{(q;q)_{n}}z^n,
    \end{equation}
where $\vert bx\vert<1$ and $\vert az\vert<1$.
\end{corollary}

\begin{theorem}[Generalized Heine's transformation formula]\label{theo_heine}
    \begin{equation}
        {}_{1}\rbhs_{1}\left(x,y;u;
        \begin{array}{c}
             z \\
             az
        \end{array};
        q,b
        \right)=\frac{(a,bxz;q)_{\infty}}{(az,bx;q)_{\infty}}{}_{1}\rbhs_{1}\left(a/b,y;u;
        \begin{array}{c}
             z \\
             bxz
        \end{array};
        q,b
        \right),
    \end{equation}
where $0<\vert b\vert<1$.    
\end{theorem}
\begin{proof}
Repeat the proof of the above theorem with
\begin{equation}
    \frac{\ope_{q}(ybq^n,u)}{(aq^n;q)_{\infty}}=\sum_{m=0}^{\infty}\opR_{m}(a/b,y;u|q)\frac{(bq^{n})^m}{(q;q)_{m}}.
\end{equation}
\end{proof}

Some specialization of the Theorem \ref{theo_heine} are
\begin{itemize}
    \item 
    We get Heine's transformation formula Eq.~\eqref{eqn_heine} if we set $u=q$, $x=1$, $y=-c/b$, $z=b$, and $a=b$ in Theorem \ref{theo_heine}.
    \item 
    \begin{equation*}
        {}_{1}\opR_{1}\left(x,y;
        \begin{array}{c}
             z \\
             az
        \end{array};
        q,b
        \right)=\frac{(a,bxz;q)_{\infty}}{(az,bx;q)_{\infty}}{}_{1}\opR_{1}\left(a/b,y;
        \begin{array}{c}
             z \\
             bxz
        \end{array};
        q,b
        \right)
    \end{equation*}
    \item 
    \begin{equation*}
        {}_{1}\opP_{1}\left(x,y;
        \begin{array}{c}
             z \\
             az
        \end{array};
        q,b
        \right)=\frac{(a,bxz;q)_{\infty}}{(az,bx;q)_{\infty}}{}_{1}\opP_{1}\left(a/b,y;
        \begin{array}{c}
             z \\
             bxz
        \end{array};
        q,b
        \right)
    \end{equation*}
    \item 
    \begin{equation*}
        {}_{1}\opS_{1}\left(x,y;
        \begin{array}{c}
             z \\
             az
        \end{array};
        q,b
        \right)=\frac{(a,bxz;q)_{\infty}}{(az,bx;q)_{\infty}}{}_{1}\opS_{1}\left(a/b,y;
        \begin{array}{c}
             z \\
             bxz
        \end{array};
        q,b
        \right)
    \end{equation*}
    \item 
    \begin{equation*}
        {}_{1}\opE_{1}\left(x,y;
        \begin{array}{c}
             z \\
             az
        \end{array};
        q,b
        \right)=\frac{(a,bxz;q)_{\infty}}{(az,bx;q)_{\infty}}{}_{1}\opE_{1}\left(a/b,y;
        \begin{array}{c}
             z \\
             bxz
        \end{array};
        q,b
        \right)
    \end{equation*}
\end{itemize}
\subsection{Mehler's formula for \texorpdfstring{$\opR_{n}(x,y;u|q)$}{r\_n(x,y;u|q)}}
\begin{theorem}[Mehler's formula]\label{theo_mehler}
    \begin{multline}
        \sum_{n=0}^{\infty}\opR_{n}(x,y;u|q)\opR_{n}(z,w;v|q)\frac{t^n}{(q;q)_{n}}\\
        =\frac{1}{(tzx;q)_{\infty}}\sum_{k=0}^{\infty}(uv)^{\binom{k}{2}}\frac{(twy)^k(tzx;q)_{k}}{(q;q)_{k}}\ope_{q}(twxv^k,v)\ope_{q}(tyzu^k,u).
    \end{multline}
\end{theorem}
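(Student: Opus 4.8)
The plan is to recast the left-hand side as a single action of the deformed $q$-exponential operator and then apply the product identity of Theorem~\ref{theo_opeE_prod_exp}. By Proposition~\ref{prop_translation} the second factor may be written $\rr_n(z,w;v|q)=\T(wD_q|v)\{z^n\}$, where $D_q$ acts on $z$ and $x,y,t$ are treated as constants. Since $\T(wD_q|v)$ is linear and independent of $n$, I would interchange it with the sum over $n$ to get
\[
\sum_{n=0}^{\infty}\rr_n(x,y;u|q)\rr_n(z,w;v|q)\frac{t^n}{(q;q)_n}
=\T(wD_q|v)\left\{\sum_{n=0}^{\infty}\rr_n(x,y;u|q)\frac{(tz)^n}{(q;q)_n}\right\}.
\]

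Next I would evaluate the inner series with the generalized $q$-binomial theorem, Eq.~(\ref{eqn_cauchy_gen}), taking $tz$ as the summation variable:
\[
\sum_{n=0}^{\infty}\rr_n(x,y;u|q)\frac{(tz)^n}{(q;q)_n}
=\frac{\e_q(ytz,u)}{(xtz;q)_\infty}
=\e_q(ytz,u)\,\e_q(xtz,1),
\]
where the last step uses $\e_q(w,1)=1/(w;q)_\infty$. Hence the whole left-hand side equals $\T(wD_q|v)\{\e_q(ytz,u)\e_q(xtz,1)\}$, which is precisely the form treated by Theorem~\ref{theo_opeE_prod_exp}.

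I would then invoke Theorem~\ref{theo_opeE_prod_exp} under the dictionary (theorem symbol $\mapsto$ present symbol): differentiation variable $x\mapsto z$, operator data $y\mapsto w$, $u\mapsto v$; first exponential $a\mapsto ty$, $v\mapsto u$; second exponential $b\mapsto tx$, $w\mapsto 1$. The resulting double sum carries $(uv)^{\binom{k}{2}}$ and $(v\cdot1)^{\binom{n}{2}}=v^{\binom{n}{2}}$, the factors $(twy)^k/(q;q)_k$ and $(twxv^k)^n/(q;q)_n$, and the two exponentials $\e_q(tyzu^k,u)$ and $\e_q(txzq^k,1)$; note that the second exponential loses all $n$-dependence because its deformation parameter has collapsed to $1$.

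Finally I would perform the sum over $n$. Its summand $v^{\binom{n}{2}}(twxv^k)^n/(q;q)_n$ sums, by the very definition of the deformed $q$-exponential, to $\e_q(twxv^k,v)$, leaving a single sum over $k$. Rewriting $\e_q(txzq^k,1)=1/(txzq^k;q)_\infty=(txz;q)_k/(txz;q)_\infty$ via Eq.~(\ref{eqn_iden1}) and pulling out $1/(tzx;q)_\infty$ then yields exactly the claimed right-hand side. The one delicate point is the bookkeeping in the dictionary above—keeping track of which deformation ($u$, $v$, or $1$) attaches to each exponential—together with the routine formal justification for interchanging $\T(wD_q|v)$ with the sum over $n$; once these are handled the identity follows immediately.
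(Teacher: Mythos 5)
Your proof is correct and follows essentially the same route as the paper's: represent one $\rr_n$ factor via the operator $\T(\cdot\,D_q|\cdot)$, interchange with the sum over $n$, apply the generalized $q$-binomial theorem Eq.~(\ref{eqn_cauchy_gen}), and expand the operator acting on a product of deformed $q$-exponentials. The only (harmless) differences are that you let the operator act in $z$ on the second factor rather than in $x$ on the first, and you finish by citing Theorem~\ref{theo_opeE_prod_exp} with $w=1$ instead of redoing the Leibniz computation by hand; your substitution dictionary and the final rewriting $\e_q(txzq^k,1)=(txz;q)_k/(txz;q)_\infty$ are all accurate.
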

\begin{proof}
By Eq.~\eqref{eqn_cauchy_gen},
    \begin{align*}
        \sum_{n=0}^{\infty}\opR_{n}(x,y;u|q)\opR_{n}(z,w;v|q)\frac{t^n}{(q;q)_{n}}
        &=\sum_{n=0}^{\infty}\opE(yD_{q}|u)\{x^n\}\opR_{n}(z,w;v|q)\frac{t^n}{(q;q)_{n}}\\
        &=\opE(yD_{q}|u)\left\{\sum_{n=0}^{\infty}\opR_{n}(z,w;v|q)\frac{(xt)^n}{(q;q)_{n}}\right\}\\
        &=\opE(yD_{q}|u)\left\{\frac{\ope_{q}(wxt,v)}{(zxt;q)_{\infty}}\right\}\\
        &=\sum_{n=0}^{\infty}u^{\binom{n}{2}}\frac{y^n}{(q;q)_{n}}D_{q}^n\left\{\frac{\ope_{q}(wxt,v)}{(zxt;q)_{\infty}}\right\}
    \end{align*}
Now, by applying Leibniz's formula Eq.~\eqref{eqn_leibniz}    
    \begin{align*}
        &\sum_{n=0}^{\infty}\opR_{n}(x,y;u|q)\opR_{n}(z,w;v|q)\frac{t^n}{(q;q)_{n}}\\
        &\hspace{1cm}=\frac{1}{(zxt;q)_{\infty}}\sum_{n=0}^{\infty}u^{\binom{n}{2}}\frac{(yt)^n}{(q;q)_{n}}\sum_{k=0}^{n}\qbinom{n}{k}_{q}v^{\binom{k}{2}}w^kz^{n-k}(zxt;q)_{k}\ope_{q}(wtxv^k,v)\\
        &\hspace{1cm}=\frac{1}{(zxt;q)_{\infty}}\sum_{k=0}^{\infty}(uv)^{\binom{k}{2}}\frac{(zxt;q)_{k}(wyt)^k}{(q;q)_{k}}\ope_{q}(wtxv^k,v)\sum_{n=0}^{\infty}u^{\binom{n}{2}}\frac{(u^kytz)^n}{(q;q)_{n}}.
    \end{align*}
This completes the proof.
\end{proof}
\begin{itemize}
    \item 
    If $u=v=1$, $x=z=1$, $y=x$, $w=y$ in the Theorem \ref{theo_mehler}, we obtain the following identity for the Rogers-Szeg\"o polynomials 
\begin{equation*}
    \sum_{n=0}^{\infty}\oph_{n}(x|q)\oph_{n}(y|q)\frac{t^n}{(q;q)_{n}}=\frac{(xyt^2;q)_{\infty}}{(t,x t,yt,x yt;q)_{\infty}},
\end{equation*}
where $\max\{\vert t\vert,\vert xt\vert,\vert yt\vert, \vert xyt\vert\}<1$.
    \item 
    If $u=v=1$, we obtain the following identity \cite{saad} for generalized the Rogers-Szeg\"o polynomials 
\begin{equation*}
    \sum_{n=0}^{\infty}\opr_{n}(x,y)\opr_{n}(z,w)\frac{t^n}{(q;q)_{n}}=\frac{(xyzwt^2;q)_{\infty}}{(txz,txw,tyw,tyz;q)_{\infty}},
\end{equation*}
where $\max\{\vert txz\vert,\vert xwt\vert,\vert tyw\vert, \vert tyw\vert\}<1$
    \item 
    If $u=v=q$, $y\mapsto-y$, and $w\mapsto-w$, we obtain the following identity 
\begin{equation}\label{eqn_mehlerPP}
    \sum_{n=0}^{\infty}\opP_{n}(x,y)\opP_{n}(z,w)\frac{t^n}{(q;q)_{n}}=\frac{(twx,tyz;q)_{\infty}}{(txz;q)_{\infty}}{}_{1}\phi_{2}\left(
        \begin{array}{c}
            txz\\
             txw,tyz
        \end{array};
        q,tyw
        \right).
\end{equation}
    
    \item 
    If $u=q^2$, $v=q^2$, and with the maps $y\mapsto qy$ and $w\mapsto qw$, then
\begin{align}
    &\sum_{n=0}^{\infty}\opS_{n}(x,y;q)\opS_{n}(z,w;q)\frac{t^n}{(q;q)_{n}}\nonumber\\
    &\hspace{2cm}=\frac{1}{(tzx;q)_{\infty}}\sum_{k=0}^{\infty}q^{4k^2}\frac{(tzx;q)_{k}}{(q;q)_{k}}(txy)^k\mathcal{R}_{q}(tyzq^{2k})\mathcal{R}_{q}(txwq^{2k}).
\end{align}

    \item 
    If $u=v=\sqrt{q}$, then
\begin{multline}
    \sum_{n=0}^{\infty}\opE_{n}(x,y;q)\opE_{n}(z,w;q)\frac{t^n}{(q;q)_{n}}\\
    =\frac{1}{(txz;q)_{\infty}}\sum_{k=0}^{\infty}q^{\binom{k}{2}}\frac{(txz;q)_{k}}{(q;q)_{k}}(twy)^k{}_{1}\phi_{1}\left(
        \begin{array}{c}
            0\\
             -\sqrt{q}
        \end{array};
        q,-txw\sqrt{q}^k.
        \right)\\
        \times{}_{1}\phi_{1}\left(
        \begin{array}{c}
            0\\
             -\sqrt{q}
        \end{array};
        q,-tyz\sqrt{q}^k
        \right).
\end{multline}
\end{itemize}

Other identities that arise as special cases of Theorem \ref{theo_mehler} are
\begin{itemize}
    \item If $u=1$, $v=q$, and $w\mapsto-w$, then
\begin{equation}
    \sum_{n=0}^{\infty}\opr_{n}(x,y)\opP_{n}(z,w)\frac{t^n}{(q;q)_{n}}=\frac{(twx;q)_{\infty}}{(tzx,tzy;q)_{\infty}}{}_{1}\phi_{1}\left(
        \begin{array}{c}
            tzx\\
             twx
        \end{array};
        q,twy
        \right).
\end{equation}
    
    \item 
    If $u=1$, $v=q^2$, and $w\mapsto qw$, then
\begin{align}
    &\sum_{n=0}^{\infty}\opr_{n}(x,y)\opS_{n}(z,w;q)\frac{t^n}{(q;q)_{n}}\nonumber\\
    &\hspace{1cm}=\frac{1}{(tyz,tzw;q)_{\infty}}\sum_{k=0}^{\infty}q^{k^2}\frac{(tzx;q)_{k}}{(q;q)_{k}}(twy)^k\mathcal{R}_{q}(twxq^{2k})\\
    &\hspace{1cm}=\frac{1}{(tyz,tzw;q)_{\infty}}\sum_{n=0}^{\infty}q^{n^2}\frac{(twx)^n}{(q;q)_{n}}{}_{1}\phi_{2}\left(
        \begin{array}{c}
            tzx\\
             0,0
        \end{array};
        q,twyq^{2n-1}
        \right).
\end{align}

    \item If $u=1$ and $v=\sqrt{q}$, then
    \begin{multline}
    \sum_{n=0}^{\infty}\opr_{n}(x,y)\opE_{n}(z,w;q)\frac{t^n}{(q;q)_{n}}\\
    =\frac{1}{(tyz,tzx;q)_{\infty}}\sum_{k=0}^{\infty}\sqrt{q}^{\binom{k}{2}}\frac{(tzx;q)_{k}}{(q;q)_{k}}(twy)^k{}_{1}\phi_{1}\left(
        \begin{array}{c}
            0\\
             -\sqrt{q}
        \end{array};
        \sqrt{q},-twxq^{k/2}.
        \right).
\end{multline}
    \item 
    If $u=q^2$, $v=q$, and $y\mapsto qy$ and $w\mapsto-w$, then
    \begin{align}
        &\sum_{n=0}^{\infty}\opS_{n}(x,y;q)\opP_{n}(z,w)\frac{t^n}{(q;q)_{n}}\nonumber\\
        &\hspace{1cm}=\frac{(twx;q)_{\infty}}{(tzx;q)_{\infty}}\sum_{k=0}^{\infty}(-1)^kq^{\frac{3k^2-k}{2}}\frac{(tzx;q)_{k}}{(twx;q)_{k}(q;q)_{k}}(twy)^k\mathcal{R}(tyzq^{2k}).
    \end{align}
    
    \item 
    If $u=q^{2}$, $v=\sqrt{q}$, and with the map $y\mapsto qy$, then
    \begin{align}
        &\sum_{n=0}^{\infty}\opS_{n}(x,y;q)\opE_{n}(z,w;q)\frac{t^n}{(q;q)_{n}}\nonumber\\
        &\hspace{1cm}=\frac{1}{(tzx;q)_{\infty}}\sum_{k=0}^{\infty}q^{\frac{5k^2-k}{2}}\frac{(twy)^k(tzx;q)_{k}}{(q;q)_{k}}\EE_{q}(twxq^{k/2})\RR_{q}(tyzq^{2k}).
    \end{align}

    \item 
    If $u=\sqrt{q}$, $v=q$, and $w\mapsto-w$, then
    \begin{align}
        &\sum_{n=0}^{\infty}\opE_{n}(x,y;q)\opP_{n}(z,w)\frac{t^n}{(q;q)_{n}}\nonumber\\
        &\hspace{1cm}=\frac{(twx;q)_{\infty}}{(tzx;q)_{\infty}}\sum_{k=0}^{\infty}q^{\frac{3}{2}\binom{k}{2}}\frac{(tzx;q)_{k}}{(twx;q)_{k}(q;q)_{k}}(twy)^k\mathcal{E}_{q}(tyzq^{k/2}).
    \end{align}
    
\end{itemize}

\subsection{Srivastava-Agarwal type formulas}

Srivastava and Agarwal \cite{Sri} derived a large number of generating functions of the form
\begin{equation}\label{eqn_SA}
    \sum_{n=0}^{\infty}Q_{n}(x;q)\frac{(\lambda;q)_{n}}{(q;q)_{n}}t^n,
\end{equation}
where $Q_{n}(x;q)$ is a $q$-polynomial. In this section, we will use Mehler's formula, Theorem~\ref{theo_mehler}, to derive the Srivastava-Agarwal generating function of the polynomials $\opR_{n}(x,y;u|q)$. 

\begin{corollary}\label{coro8}
The representation type Srivastava-Agarwal, Eq.~\eqref{eqn_SA}, of $\opR_{n}(x,y;u|q)$ is
\begin{equation}
        \sum_{n=0}^{\infty}\opR_{n}(x,y;u|q)\frac{(a;q)_{n}}{(q;q)_{n}}t^n
        =\frac{(atx;q)_{\infty}}{(tx;q)_{\infty}}\sum_{k=0}^{\infty}(uq)^{\binom{k}{2}}\frac{(-aty)^k(tx;q)_{k}}{(atx;q)_{k}(q;q)_{k}}\ope_{q}(tyu^k,u).
    \end{equation}    
\end{corollary}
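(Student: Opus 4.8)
The plan is to deduce this formula from Mehler's formula (Theorem~\ref{theo_mehler}) by specializing the second polynomial to a $q$-shifted factorial. The starting observation is the evaluation $\rr_{n}(1,-a;q|q)=(a;q)_{n}$, which is the specialization $\rr_{n}(1,-x;1,q|q)=(x;q)_{n}$ recorded above, written in the reduced notation of Eq.~(\ref{eqn_def_rs}). Thus, setting $z=1$, $w=-a$, and $v=q$ in Theorem~\ref{theo_mehler} replaces the factor $\rr_{n}(z,w;v|q)$ by $(a;q)_{n}$, so the left-hand side becomes exactly $\sum_{n\ge 0}\rr_{n}(x,y;u|q)\frac{(a;q)_{n}}{(q;q)_{n}}t^{n}$, the series we wish to evaluate.

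What remains is to simplify the right-hand side of Mehler's formula under this substitution. First I would record the easy replacements: $(uv)^{\binom{k}{2}}\mapsto(uq)^{\binom{k}{2}}$, $(twy)^{k}\mapsto(-aty)^{k}$, and $(tzx;q)_{k}\mapsto(tx;q)_{k}$. The one genuinely substantive step is the $x$-exponential $\e_{q}(twxv^{k},v)=\e_{q}(-atxq^{k},q)$: invoking $\e_{q}(-s,q)=(s;q)_{\infty}$ followed by $(cq^{k};q)_{\infty}=(c;q)_{\infty}/(c;q)_{k}$ from Eq.~(\ref{eqn_iden1}), this factor equals $(atx;q)_{\infty}/(atx;q)_{k}$. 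The infinite product $(atx;q)_{\infty}$ pulls out of the sum and combines with $1/(tx;q)_{\infty}$ to form the global prefactor, while $(atx;q)_{k}$ lands in the denominator of the summand; the surviving $y$-exponential $\e_{q}(tyzu^{k},u)=\e_{q}(tyu^{k},u)$ stays inside the sum. Collecting these pieces yields the asserted single-sum representation.

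I expect the only real obstacle to be bookkeeping: keeping the two $q$-exponentials of Mehler's formula distinct under the substitution, and correctly splitting $\e_{q}(-atxq^{k},q)$ into the $x$-dependent prefactor times the $k$-dependent correction --- a mismatch here is precisely what would interchange the $(atx;\cdot)$ and $(ty;\cdot)$ data in the answer. As a self-contained cross-check avoiding Mehler altogether, one can argue operationally: by Proposition~\ref{prop_translation} the left-hand side equals $\T(yD_{q}|u)\big\{\sum_{n\ge 0}\frac{(a;q)_{n}}{(q;q)_{n}}(tx)^{n}\big\}=\T(yD_{q}|u)\big\{(atx;q)_{\infty}/(tx;q)_{\infty}\big\}$ by the $q$-binomial theorem, Eq.~(\ref{eqn_qbin_the}); writing $(atx;q)_{\infty}/(tx;q)_{\infty}=\e_{q}(-atx,q)\,\e_{q}(tx,1)$ as a product of two deformed $q$-exponentials in $x$ and applying Theorem~\ref{theo_opeE_prod_exp} with parameters $(-at,q)$ and $(t,1)$ produces a double series whose inner sum collapses to $\e_{q}(tyu^{k},u)$, reproducing the same closed form.
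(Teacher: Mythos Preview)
Your approach---specialize Mehler's formula (Theorem~\ref{theo_mehler}) by taking $z=1$, $w=-a$, $v=q$ so that $\rr_{n}(z,w;v|q)=(a;q)_{n}$---is exactly the paper's intended derivation; the corollary is placed immediately after Theorem~\ref{theo_mehler} with no separate argument, and the very next paragraph recovers the Srivastava--Agarwal identity by the same substitution route. Your alternative cross-check via Proposition~\ref{prop_translation} and Theorem~\ref{theo_opeE_prod_exp} is also sound, and is in fact just the proof of Mehler's formula run on this particular input.

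One point must be flagged, however: your (correct) simplification does \emph{not} reproduce the formula as printed. Carrying out the substitutions you list gives
\[
\frac{(atx;q)_{\infty}}{(tx;q)_{\infty}}\sum_{k=0}^{\infty}(uq)^{\binom{k}{2}}\frac{(-aty)^{k}(tx;q)_{k}}{(atx;q)_{k}(q;q)_{k}}\,\e_{q}(tyu^{k},u),
\]
whereas the displayed corollary has $(-ty;q)_{\infty}$, $(-ty;q)_{k}$ and $\e_{q}(-atxv^{k},v)$ in those slots, with an undetermined $v$ left over. The printed version is a slip: it would require $\e_{q}(tyu^{k},u)=(-ty;q)_{\infty}/(-ty;q)_{k}$, which holds only for $u=q$, not for general $u$. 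Your formula is the correct one---it is precisely what specializes (at $u=1$, $x=1$, $y\mapsto x$, $a\mapsto y$) to the Srivastava--Agarwal identity quoted immediately afterwards. So instead of asserting that your pieces ``yield the asserted single-sum representation'', you should write out your closed form explicitly and note that the statement as printed contains the very $(atx;\cdot)\leftrightarrow(ty;\cdot)$ interchange you warned about.
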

\begin{proof}
Set $v=q$, $z=1$, $w=-a$ in Theorem \ref{theo_mehler}. Then
\begin{align*}
    &\sum_{n=0}^{\infty}\opR_{n}(x,y;u|q)\frac{(a;q)_{n}}{(q;q)_{n}}t^n\\
    &\hspace{1cm}=\frac{1}{(tzx;q)_{\infty}}\sum_{k=0}^{\infty}(uq)^{\binom{k}{2}}\frac{(-aty)^k(tzx;q)_{k}}{(q;q)_{k}}(atxq^k;q)_{\infty}\ope_{q}(tyzu^k,u)\\
    &\hspace{1cm}=\frac{(atx;q)_{\infty}}{(tzx;q)_{\infty}}\sum_{k=0}^{\infty}(uq)^{\binom{k}{2}}\frac{(-aty)^k(tzx;q)_{k}}{(atx;q)_{k}(q;q)_{k}}\ope_{q}(tyzu^k,u).
\end{align*}
\end{proof}
\begin{itemize}
    \item
    If $u=1$, $v=q$, $x=z=1$, $y=x$, $w=-y$ in Corollary \ref{coro8}, then we obtain the following result of Srivastava and Agarwal \cite{Sri}
\begin{align*}
    \sum_{n=0}^{\infty}\opr_{n}(x,y)\frac{(a;q)_{n}}{(q;q)_{n}}t^n=\frac{(atx;q)_{\infty}}{(tx,ty;q)_{\infty}}{}_{1}\phi_{1}\left(
        \begin{array}{c}
             tx \\
             atx
        \end{array};
        q,aty
        \right).
\end{align*}
\item 
If $u=q$, $v=q^2$, $w\mapsto-w$, $z=1$, and $y\mapsto qy$ in Corollary \ref{coro8}, then the representation type Srivastava-Agarwal of $\opS_{n}(x,y;q)$ is
\begin{equation}
    \sum_{n=0}^{\infty}\opS_{n}(x,y;q)\frac{(a;q)_{n}}{(q;q)_{n}}t^n=\frac{(atx;q)_{\infty}}{(tx;q)_{\infty}}\sum_{k=0}^{\infty}q^{\frac{3k^2-k}{2}}\frac{(tx;q)_{k}(-aty)^k}{(q;q)_{k}(atx;q)_{k}}\mathcal{R}_{q}(tyq^{2k}).
\end{equation}
\item 
The representation type Srivastava-Agarwal of $\opE_{n}(x,y;q)$ is
\begin{multline}
    \sum_{n=0}^{\infty}\opE_{n}(x,y;q)\frac{(a;q)_{n}}{(q;q)_{n}}t^n\\
    =\frac{(ty;q)_{\infty}}{(tx;q)_{\infty}}\sum_{k=0}^{\infty}(q\sqrt{q})^{\binom{k}{2}}\frac{(tx;q)_{k}}{(q;q)_{k}(ty;q)_{k}}(tay)^k{}_{1}\phi_{1}\left(
        \begin{array}{c}
            0\\
             -\sqrt{q}
        \end{array};
        \sqrt{q},taxq^{k/2}
        \right).
\end{multline}
\end{itemize}

\subsection{Transformation formulas for \texorpdfstring{${}_{2}\Phi_{1}$}{2Phi1}}

Setting $a\mapsto b$ and $b\mapsto a$ in corollaries \ref{coro2} and \ref{coro3}, then matching with Theorem \ref{theo_oper_ax/bx}, we obtain, respectively
\begin{align}
    {}_{2}\Phi_{1}\left(
    \begin{array}{c}
         a/b,0\\
         ax
    \end{array}
    ;q,u,by
    \right)&=\sum_{k=0}^{\infty}\frac{u^{\binom{k}{2}}(by)^k}{(q;q)_{k}(ax;q)_{k}}{}_{1}\Phi_{1}\left(
        \begin{array}{c}
             0 \\
             axq^k
        \end{array};
        q,u,u^kay
        \right)\label{eqn_e-phi2}.\\
        &=\sum_{k=0}^{\infty}\frac{(uq)^{\binom{k}{2}}(-ay)^k}{(q;q)_{k}(ax;q)_{k}}\ope_{q}(u^{k}by,u)\label{eqn_e-phi}.
\end{align}

\begin{corollary}
From Eq.~\eqref{eqn_e-phi2} with $u=1$, we obtain
\begin{equation}
    {}_{2}\phi_{1}\left(
        \begin{array}{c}
             a/b,0 \\
             ax
        \end{array};
        q,ay
        \right)=\sum_{k=0}^{\infty}\frac{(by)^k}{(q;q)_{k}(ax;q)_{k}}{}_{1}\phi_{1}\left(
        \begin{array}{c}
             0 \\
             axq^k
        \end{array};
        q,ay
        \right).
\end{equation}
\end{corollary}

\begin{corollary}[Transformation formula for ${}_{2}\phi_{1}$]
From Eq.~\eqref{eqn_e-phi} with $u=1$, we obtain the important transformation formula from ${}_{2}\phi_{1}$ sum to ${}_{1}\phi_{1}$
\begin{equation}
    {}_{2}\phi_{1}\left(
        \begin{array}{c}
             a/b,0 \\
             ax
        \end{array};
        q,by
        \right)=\frac{1}{(by;q)_{\infty}}{}_{1}\phi_{1}\left(
        \begin{array}{c}
             0 \\
             ax
        \end{array};
        q,ay
        \right).
\end{equation}
\end{corollary}

\begin{corollary}
If we set $u=q$ in Eq.~\eqref{eqn_e-phi2}, we obtain the identity
\begin{align}
    {}_{1}\phi_{1}\left(
    \begin{array}{c}
         a/b\\
         ax
    \end{array}
    ;q,-by
    \right)&=\sum_{k=0}^{\infty}\frac{q^{\binom{k}{2}}(by)^k}{(q;q)_{k}(ax;q)_{k}}{}_{0}\phi_{1}\left(
        \begin{array}{c}
             - \\
             axq^k
        \end{array};
        q,-q^kay
        \right).
\end{align}
\end{corollary}

\begin{corollary}[Transformation formula for ${}_{1}\phi_{1}$]
From Eq.~\eqref{eqn_e-phi} with $u=q$, we obtain the important transformation formula from ${}_{1}\phi_{1}$ sum to ${}_{1}\phi_{2}$
    \begin{equation}
        {}_{1}\phi_{1}\left(
    \begin{array}{c}
         a/b\\
         ax
    \end{array}
    ;q,-by
    \right)=(by;q)_{\infty}{}_{1}\phi_{2}\left(
    \begin{array}{c}
         0\\
         ax,by
    \end{array}
    ;q,ay
    \right).    
    \end{equation}
\end{corollary}

\begin{corollary}
From Eqs. \eqref{eqn_e-phi2} and \eqref{eqn_e-phi}, with $u=q^2$, and $y\mapsto qy$, we obtain the identities
\begin{align*}
    {}_{1}\phi_{2}\left(
    \begin{array}{c}
         a/b\\
         ax,0
    \end{array}
    ;q,qby
    \right)&=\sum_{k=0}^{\infty}\frac{q^{k^2}(by)^k}{(q;q)_{k}(ax;q)_{k}}{}_{0}\phi_{2}\left(
        \begin{array}{c}
             - \\
             axq^k,0
        \end{array};
        q,q^{2k}ay
        \right)\\
        &=\sum_{k=0}^{\infty}\frac{q^{\frac{3k^2-k}{2}}(-ay)^k}{(q;q)_{k}(ax;q)_{k}}\RR_{q}(q^{2k}by).
\end{align*}
\end{corollary}

\begin{corollary}
From Eq.~\eqref{eqn_e-phi2} with $u=\sqrt{q}$ we obtain
    \begin{align}
    &{}_{3}\phi_{3}\left(
    \begin{array}{c}
         \sqrt{a/b},-\sqrt{a/b},0\\
         \sqrt{ax},-\sqrt{ax},-\sqrt{q}
    \end{array}
    ;\sqrt{q},-by
    \right)\nonumber\\
    &\hspace{1cm}=\sum_{k=0}^{\infty}\frac{q^{\frac{1}{2}\binom{k}{2}}(by)^k}{(q;q)_{k}(ax;q)_{k}}{}_{1}\phi_{3}\left(
        \begin{array}{c}
             0 \\
             \sqrt{axq^k},-\sqrt{axq^k},-\sqrt{q}
        \end{array};
        \sqrt{q},q^{k/2}ay
        \right).
\end{align}
\end{corollary}
Finally, set $x=z=1$, $y=x$, and $w=y$ in Eq.~\eqref{eqn_mehlerPP}. Then we have the corollary.
\begin{corollary}[Transformation formula for ${}_{1}\phi_{2}$]
For $\vert t\vert<1$, then
\begin{align}
    {}_{1}\phi_{2}\left(
        \begin{array}{c}
             t \\
             tx,ty
        \end{array};
        q,txy
        \right)=\frac{(t;q)_{\infty}}{(tx,ty;q)_{\infty}}{}_{2}\phi_{1}\left(
        \begin{array}{c}
             x,y \\
             0
        \end{array};
        q,t
        \right).
\end{align}    
\end{corollary}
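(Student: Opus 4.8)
The plan is to obtain the claim as a specialization of Mehler's formula (Theorem~\ref{theo_mehler}), chosen so that both deformed homogeneous polynomials collapse to ordinary $q$-shifted factorials. Recall from Eq.~(\ref{eqn_def_rs}) and the specialization list that $\rr_n(1,-x;q|q)=\rr_n(1,-x;1,q|q)=(x;q)_n$. Accordingly I would apply Theorem~\ref{theo_mehler} with $u=v=q$, first base point $(1,-x)$ and second base point $(1,-y)$. The left-hand side then reduces to $\sum_{n\ge0}(x;q)_n(y;q)_n\,t^n/(q;q)_n$, and since $(0;q)_n=1$ and the normalizing factor $[(-1)^nq^{\binom n2}]^{1+s-r}$ is trivial for $r=2,s=1$, this is exactly ${}_2\phi_1(x,y;0;q,t)$. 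Thus the ${}_2\phi_1$ appearing on the right of the statement is delivered for free by the left-hand side of Mehler.

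Next I would evaluate the right-hand side of Mehler under the same specialization. With these choices $tzx=t$, so the prefactor is $1/(t;q)_\infty$; the weight is $(uv)^{\binom k2}=q^{k(k-1)}$; the monomial $(twy)^k=(txy)^k$, the two minus signs cancelling; and $(tzx;q)_k=(t;q)_k$. Because both deformed exponentials sit at base $q$, the identity $\e_q(-z,q)=(z;q)_\infty$ turns them into $(tyq^k;q)_\infty$ and $(txq^k;q)_\infty$. Rewriting these with Eq.~(\ref{eqn_iden1}) in the form $(aq^k;q)_\infty=(a;q)_\infty/(a;q)_k$ produces a factor $(tx,ty;q)_\infty$, which pulls out of the sum, leaving $1/[(tx;q)_k(ty;q)_k]$ inside.

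What remains inside the sum is $\sum_{k\ge0}\frac{(t;q)_k}{(q;q)_k(tx;q)_k(ty;q)_k}\,q^{k(k-1)}(txy)^k$, which matches the series definition of ${}_1\phi_2(t;tx,ty;q,\cdot)$ once the normalizing factor $[(-1)^kq^{\binom k2}]^{1+s-r}=q^{k(k-1)}$ for $r=1,s=2$ is taken into account. Equating the two evaluations of Mehler's sum and solving for this ${}_1\phi_2$ then isolates it as $\frac{(t;q)_\infty}{(tx,ty;q)_\infty}{}_2\phi_1(x,y;0;q,t)$, which is the asserted transformation. The only real obstacle is bookkeeping rather than analysis: one must keep the two $\e_q(\cdot,q)$ factors and their arguments $txq^k,tyq^k$ distinct, track the sign produced by $(twy)^k$, and correctly match the surviving $q^{k(k-1)}$ weight and the sign of the argument against the ${}_r\phi_s$ normalization $[(-1)^kq^{\binom k2}]^{1+s-r}$ — all the genuine content is already carried by Mehler's formula.
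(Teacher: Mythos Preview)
Your proposal is correct and follows exactly the paper's own proof, which consists of the single instruction to set $u=v=q$, $x=z=1$, $y=-x$, $w=-y$ in Theorem~\ref{theo_mehler}. You have simply unpacked that specialization in detail: the left side of Mehler collapses to ${}_2\phi_1(x,y;0;q,t)$ via $\rr_n(1,-x;q|q)=(x;q)_n$, and on the right the two $\e_q(\cdot,q)$ factors become $(txq^k;q)_\infty$ and $(tyq^k;q)_\infty$, which after Eq.~(\ref{eqn_iden1}) produce the ${}_1\phi_2$ sum.
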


\subsection{Rogers type formulas}

\begin{theorem}[Rogers formula]\label{theo_rogers}
    \begin{multline}
        \sum_{n=0}^{\infty}\sum_{m=0}^{\infty}\opR_{n+m}(x,y;u|q)\frac{v^{\binom{n}{2}}w^{\binom{m}{2}}t^{n}s^{m}}{(q;q)_{n}(q;q)_{m}}\\
        =\sum_{k=0}^{\infty}\sum_{n=0}^{\infty}(uv)^{\binom{k}{2}}(uw)^{\binom{n}{2}}\frac{(ty)^k}{(q;q)_{k}}\frac{(u^ksy)^{n}}{(q;q)_{n}}\ope_{q}(tv^kx,v)\ope_{q}(sq^{k}w^{n}x,w).
    \end{multline}
\end{theorem}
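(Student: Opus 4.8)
The plan is to recognize the double sum on the left as a single application of the operator $\T(yD_{q}|u)$ to a product of two deformed $q$-exponential functions, and then to invoke Theorem~\ref{theo_opeE_prod_exp} directly. No new computation should be needed beyond what that theorem already supplies.

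First I would use the operator representation from Proposition~\ref{prop_translation}, namely $\rr_{n+m}(x,y;u|q)=\T(yD_{q}|u)\{x^{n+m}\}$, to rewrite each summand. Since $\T(yD_{q}|u)$ acts linearly and term by term in the variable $x$, I can pull it outside both summations to obtain
\[
\sum_{n=0}^{\infty}\sum_{m=0}^{\infty}\rr_{n+m}(x,y;u|q)\frac{v^{\binom{n}{2}}w^{\binom{m}{2}}t^{n}s^{m}}{(q;q)_{n}(q;q)_{m}}
=\T(yD_{q}|u)\bigg\{\sum_{n=0}^{\infty}\sum_{m=0}^{\infty}x^{n+m}\frac{v^{\binom{n}{2}}w^{\binom{m}{2}}t^{n}s^{m}}{(q;q)_{n}(q;q)_{m}}\bigg\}.
\]
Next I would factor the inner double sum. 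It separates as a product over $n$ and over $m$, and each factor is, by the definition of the deformed $q$-exponential function, $\sum_{n\geq0}v^{\binom{n}{2}}(tx)^{n}/(q;q)_{n}=\e_{q}(tx,v)$ and likewise $\sum_{m\geq0}w^{\binom{m}{2}}(sx)^{m}/(q;q)_{m}=\e_{q}(sx,w)$. Hence the argument of the operator collapses to $\e_{q}(tx,v)\,\e_{q}(sx,w)$, so that
\[
\sum_{n=0}^{\infty}\sum_{m=0}^{\infty}\rr_{n+m}(x,y;u|q)\frac{v^{\binom{n}{2}}w^{\binom{m}{2}}t^{n}s^{m}}{(q;q)_{n}(q;q)_{m}}
=\T(yD_{q}|u)\big\{\e_{q}(tx,v)\,\e_{q}(sx,w)\big\}.
\]
Applying Theorem~\ref{theo_opeE_prod_exp} with $a=t$ and $b=s$ then reproduces verbatim the claimed right-hand side, since the substitution sends $(ay)^{k}\mapsto(ty)^{k}$, $(u^{k}by)^{n}\mapsto(u^{k}sy)^{n}$, $\e_{q}(av^{k}x,v)\mapsto\e_{q}(tv^{k}x,v)$, and $\e_{q}(bq^{k}w^{n}x,w)\mapsto\e_{q}(sq^{k}w^{n}x,w)$.

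The only genuine subtlety — the step I would treat most carefully — is the legitimacy of interchanging the operator $\T(yD_{q}|u)$ with the infinite double summation and of factoring the resulting series into the product $\e_{q}(tx,v)\,\e_{q}(sx,w)$. These manipulations are formal in exactly the same sense as the generating-function identities used earlier in the paper; to make them rigorous one assumes $0<|q|<1$ and restricts $t,s,x,y$ to a region in which all the series converge absolutely, so that term-by-term application of the $D_{q}^{k}$ and the rearrangement of the double sum are justified. Once this convergence is granted, the identity is an immediate consequence of Theorem~\ref{theo_opeE_prod_exp}, with no further calculation required.
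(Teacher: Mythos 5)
Your proposal is correct and follows exactly the paper's own argument: rewrite $\rr_{n+m}(x,y;u|q)$ as $\T(yD_{q}|u)\{x^{n+m}\}$, pull the operator out of the double sum, factor the remaining series as $\e_{q}(tx,v)\,\e_{q}(sx,w)$, and apply Theorem~\ref{theo_opeE_prod_exp} with $a=t$, $b=s$. Your added remark on justifying the interchange of operator and summation is extra care the paper leaves implicit, not a different route.
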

\begin{proof}
    \begin{align*}
        &\sum_{n=0}^{\infty}\sum_{m=0}^{\infty}\opR_{n+m}(x,y;u|q)\frac{v^{\binom{n}{2}}w^{\binom{m}{2}}t^{n}s^{m}}{(q;q)_{n}(q;q)_{m}}\\
        &\hspace{1cm}=\sum_{n=0}^{\infty}\sum_{m=0}^{\infty}\opE(yD_{q}|u)\{x^{n+m}\}\frac{v^{\binom{n}{2}}w^{\binom{m}{2}}t^{n}s^{m}}{(q;q)_{n}(q;q)_{m}}\\
        &\hspace{1cm}=\opE(yD_{q}|u)\left\{\sum_{n=0}^{\infty}v^{\binom{n}{2}}\frac{(tx)^{n}}{(q;q)_{n}}\sum_{m=0}^{\infty}w^{\binom{m}{2}}\frac{(sx)^{m}}{(q;q)_{m}}\right\}\\
        &\hspace{1cm}=\sum_{k=0}^{\infty}\sum_{n=0}^{\infty}(uv)^{\binom{k}{2}}(uw)^{\binom{n}{2}}\frac{(ty)^k}{(q;q)_{k}}\frac{(u^ksy)^{n}}{(q;q)_{n}}\ope_{q}(tv^kx,v)\ope_{q}(sq^{k}w^{n}x,w).
    \end{align*}
\end{proof}

\begin{corollary}\label{coro9}
If $v=w=1$ in Theorem \ref{theo_rogers}, then
\begin{align}
    &\sum_{n=0}^{\infty}\sum_{m=0}^{\infty}\opR_{n+m}(x,y;u|q)\frac{t^{n}s^{m}}{(q;q)_{n}(q;q)_{m}}\nonumber\\
    &\hspace{2cm}=\frac{1}{(tx,sx;q)_{\infty}}\sum_{k=0}^{\infty}u^{\binom{k}{2}}\frac{(sx;q)_{k}}{(q;q)_{k}}(ty)^k\ope_{q}(u^{k}sy,u).
\end{align}
\end{corollary}

\begin{itemize}
    \item
    The Rogers formula for $\oph_{n}(x|q)$ is:
\begin{equation*}
    \sum_{n=0}^{\infty}\sum_{m=0}^{\infty}\oph_{n+m}(x|q)\frac{t^n}{(q;q)_{n}}\frac{s^m}{(q;q)_{m}}=\frac{(x st;q)_{\infty}}{(t,x t,s,xs;q)_{\infty}},\ \max\{\vert s\vert,\vert t\vert,\vert xs\vert,\vert xt\vert\}<1.
\end{equation*}

    \item 
    If $u=1$, we obtain the following results in \cite{saad}
\begin{align*}
    \sum_{n=0}^{\infty}\sum_{m=0}^{\infty}\mathrm{r}_{n+m}(x,y)\frac{t^{n}s^{m}}{(q;q)_{n}(q;q)_{m}}
        &=\frac{(stxy;q)_{\infty}}{(tx,sx,sy,ty;q)_{\infty}}.
\end{align*}
    
    \item
    If $u=q$, then we get the following result in \cite{saad2},
\begin{align*}
    &\sum_{n=0}^{\infty}\sum_{m=0}^{\infty}\opP_{n+m}(x,y)\frac{t^{n}s^{m}}{(q;q)_{n}(q;q)_{m}}=\frac{(sy;q)_{\infty}}{(tx,sx;q)_{\infty}}{}_{1}\phi_{1}\left(
        \begin{array}{c}
             sx \\
             sy
        \end{array};
        q,ty
        \right)
\end{align*}
where $\vert tx\vert<1$, $\vert sx\vert<1$.

    \item 
    If $u=q^2$ and mapping $y\mapsto qy$, then
\begin{align}
    &\sum_{n=0}^{\infty}\sum_{m=0}^{\infty}\opS_{n+m}(x,y;q)\frac{t^{n}s^{m}}{(q;q)_{n}(q;q)_{m}}\nonumber\\
    &\hspace{2cm}=\frac{1}{(tx,sx;q)_{\infty}}\sum_{k=0}^{\infty}q^{k^2}\frac{(sx;q)_{k}}{(q;q)_{k}}(ty)^k\RR_{q}(q^{2k}sy).
\end{align}

    \item 
    If $u=\sqrt{q}$, then
    \begin{align}
    &\sum_{n=0}^{\infty}\sum_{m=0}^{\infty}\opE_{n+m}(x,y;q)\frac{t^{n}s^{m}}{(q;q)_{n}(q;q)_{m}}\nonumber\\
    &\hspace{2cm}=\frac{1}{(tx,sx;q)_{\infty}}\sum_{k=0}^{\infty}\sqrt{q}^{\binom{k}{2}}\frac{(sx;q)_{k}}{(q;q)_{k}}(ty)^k\EE_{q}(q^{k/2}sy).
\end{align}
\end{itemize}

\begin{corollary}\label{coro10}
If $v=1,w=q$ in Theorem \ref{theo_rogers}, then
\begin{align}
    &\sum_{n=0}^{\infty}\sum_{m=0}^{\infty}(-1)^m\opR_{n+m}(x,y;u|q)\frac{q^{\binom{m}{2}}t^{n}s^{m}}{(q;q)_{n}(q;q)_{m}}\nonumber\\
        &\hspace{2cm}=\frac{(sx;q)_{\infty}}{(tx;q)_{\infty}}\sum_{k=0}^{\infty}u^{\binom{k}{2}}\frac{(ty)^k}{(sx;q)_{k}(q;q)_{k}}{}_{1}\Phi_{1}\left(
        \begin{array}{c}
             0 \\
             sxq^k
        \end{array};
        q,u,u^ksy
        \right).
\end{align}
\end{corollary}

\begin{itemize}
    \item
If $u=1$ in Corollary \ref{coro10}, then
\begin{align}
    &\sum_{n=0}^{\infty}\sum_{m=0}^{\infty}(-1)^m\opr_{n+m}(x,y)\frac{q^{\binom{m}{2}}t^{n}s^{m}}{(q;q)_{n}(q;q)_{m}}\nonumber\\
        &\hspace{2cm}=\frac{(sx;q)_{\infty}}{(tx;q)_{\infty}}\sum_{k=0}^{\infty}\frac{(ty)^k}{(sx;q)_{k}(q;q)_{k}}{}_{1}\phi_{1}\left(
        \begin{array}{c}
             0 \\
             sxq^k
        \end{array};
        q,sy
        \right).
\end{align}

    \item 
If $u=q$ in Corollary \ref{coro10}, then
\begin{align}
    &\sum_{n=0}^{\infty}\sum_{m=0}^{\infty}(-1)^m\opP_{n+m}(x,y)\frac{q^{\binom{m}{2}}t^{n}s^{m}}{(q;q)_{n}(q;q)_{m}}\nonumber\\
        &\hspace{2cm}=\frac{(sx;q)_{\infty}}{(tx;q)_{\infty}}\sum_{k=0}^{\infty}q^{\binom{k}{2}}\frac{(ty)^k}{(sx;q)_{k}(q;q)_{k}}{}_{0}\phi_{1}\left(
        \begin{array}{c}
             - \\
             sxq^k
        \end{array};
        q,-q^ksy
        \right).
\end{align}

    \item 
If $u=q^2$ and mapping $y\mapsto qy$ in Corollary \ref{coro10}, then
\begin{align}
    &\sum_{n=0}^{\infty}\sum_{m=0}^{\infty}(-1)^m\opS_{n+m}(x,y;q)\frac{q^{\binom{m}{2}}t^{n}s^{m}}{(q;q)_{n}(q;q)_{m}}\nonumber\\
        &\hspace{2cm}=\frac{(sx;q)_{\infty}}{(tx;q)_{\infty}}\sum_{k=0}^{\infty}q^{k^2}\frac{(ty)^k}{(sx;q)_{k}(q;q)_{k}}{}_{0}\phi_{2}\left(
        \begin{array}{c}
             - \\
             sxq^k,0
        \end{array};
        q,q^{2k+1}sy
        \right).
\end{align}

    \item 
If $u=\sqrt{q}$ in Corollary \ref{coro10}, then
    \begin{align}
    &\sum_{n=0}^{\infty}\sum_{m=0}^{\infty}(-1)^m\opE_{n+m}(x,y;q)\frac{q^{\binom{m}{2}}t^{n}s^{m}}{(q;q)_{n}(q;q)_{m}}\nonumber\\
        &=\frac{(sx;q)_{\infty}}{(tx;q)_{\infty}}\sum_{k=0}^{\infty}\sqrt{q}^{\binom{k}{2}}\frac{(ty)^k}{(sx;q)_{k}(q;q)_{k}}{}_{1}\phi_{3}\left(
        \begin{array}{c}
             0 \\
             \sqrt{sxq^k},-\sqrt{sxq^k},-\sqrt{q}
        \end{array};
        \sqrt{q},-q^{k/2}sy
        \right).
\end{align}
\end{itemize}

\begin{corollary}\label{coro24}
If $v=w=q$ in Theorem \ref{theo_rogers}, then
\begin{align}
    &\sum_{n=0}^{\infty}\sum_{m=0}^{\infty}(-1)^{n+m}\opR_{n+m}(x,y;u|q)\frac{q^{\binom{n}{2}+\binom{m}{2}}t^{n}s^{m}}{(q;q)_{n}(q;q)_{m}}\nonumber\\
        &\hspace{1cm}=(tx,sx;q)_{\infty}\sum_{k=0}^{\infty}(qu)^{\binom{k}{2}}\frac{(-ty)^k}{(tx;q)_{k}(sx;q)_{k}(q;q)_{k}}{}_{1}\Phi_{1}\left(
        \begin{array}{c}
             0 \\
             sxq^k
        \end{array};
        q,u,u^ksy
        \right).
\end{align}
\end{corollary}

\begin{itemize}
    \item 
    If $u=1$ in Corollary \ref{coro24}, then
\begin{align}
    &\sum_{n=0}^{\infty}\sum_{m=0}^{\infty}(-1)^{n+m}\opr_{n+m}(x,y)\frac{q^{\binom{n}{2}+\binom{m}{2}}t^{n}s^{m}}{(q;q)_{n}(q;q)_{m}}\nonumber\\
        &\hspace{2cm}=(tx,sx;q)_{\infty}\sum_{k=0}^{\infty}q^{\binom{k}{2}}\frac{(-ty)^k}{(tx;q)_{k}(sx;q)_{k}(q;q)_{k}}{}_{1}\phi_{1}\left(
        \begin{array}{c}
             0 \\
             sxq^k
        \end{array};
        q,sy
        \right).
\end{align}

    \item 
If $u=q$ in Corollary \ref{coro24}, then
\begin{align}
    &\sum_{n=0}^{\infty}\sum_{m=0}^{\infty}(-1)^{n+m}\opP_{n+m}(x,y)\frac{q^{\binom{n}{2}+\binom{m}{2}}t^{n}s^{m}}{(q;q)_{n}(q;q)_{m}}\nonumber\\
        &\hspace{2cm}=(tx,sx;q)_{\infty}\sum_{k=0}^{\infty}q^{k^2}\frac{(-ty)^k}{(tx;q)_{k}(sx;q)_{k}(q;q)_{k}}{}_{0}\phi_{1}\left(
        \begin{array}{c}
             - \\
             sxq^k
        \end{array};
        q,-q^ksy
        \right).
\end{align}

    \item 
If $u=q^2$ and mapping $y\mapsto qy$ in Corollary \ref{coro24}, then
\begin{align}
    &\sum_{n=0}^{\infty}\sum_{m=0}^{\infty}(-1)^{n+m}\opS_{n+m}(x,y;q)\frac{q^{\binom{n}{2}+\binom{m}{2}}t^{n}s^{m}}{(q;q)_{n}(q;q)_{m}}\nonumber\\
        &=(tx,sx;q)_{\infty}\sum_{k=0}^{\infty}q^{\frac{3k^2-k}{2}}\frac{(-ty)^k}{(tx;q)_{k}(sx;q)_{k}(q;q)_{k}}{}_{0}\phi_{2}\left(
        \begin{array}{c}
             - \\
             sxq^k,0
        \end{array};
        q,q^{2k+1}sy
        \right).
\end{align}

    \item 
If $u=\sqrt{q}$ in Corollary \ref{coro24}, then
    \begin{align}
    &\sum_{n=0}^{\infty}\sum_{m=0}^{\infty}(-1)^{n+m}\opE_{n+m}(x,y;q)\frac{q^{\binom{n}{2}+\binom{m}{2}}t^{n}s^{m}}{(q;q)_{n}(q;q)_{m}}\nonumber\\
        &\hspace{1cm}=(tx,sx;q)_{\infty}\sum_{k=0}^{\infty}(q\sqrt{q})^{\binom{k}{2}}\frac{(-ty)^k}{(tx;q)_{k}(sx;q)_{k}(q;q)_{k}}\nonumber\\
        &\hspace{4cm}\times{}_{1}\phi_{3}\left(
        \begin{array}{c}
             0 \\
             \sqrt{sxq^k},-\sqrt{sxq^k},-\sqrt{q}
        \end{array};
        \sqrt{q},q^{k/2}sy
        \right).
\end{align}
\end{itemize}





{\small
    
}

\end{document}